\newlength\mylen
\newcolumntype{C}{>>{\hfil$}p{\mylen}<{$\hfil}}
\newtheorem{theorem}{Theorem}[section] % \rhomberwithin{equation}{section}
\newtheorem*{rep@theorem}{\rep@title}\newcommand{\newreptheorem}[2]{%
\newenvironment{rep#1}[1]{%
\def\rep@title{\bf #2 \ref{##1}}%
\begin{rep@theorem}}%
{\end{rep@theorem}}}
\newtheorem{cor}[theorem]{Corollary}
\newtheorem{prop}[theorem]{Proposition}
\newtheorem{lem}[theorem]{Lemma}
\newtheorem*{exam}{Example}
\newenvironment{example}{\begin{exam}\rm}{\end{exam}}
\theoremstyle{definition}
\newtheorem{remark}[theorem]{Remark}%numbered remark enviroment
\newcommand\ehr{\operatorname{ehr}}
\newcommand\tc{\operatorname{tc}} 
\newcommand\lc{\operatorname{lc}} 
\newcommand\pc{\operatorname{pc}} 
\newcommand\tr{\operatorname{tr}} 
\newcommand\lt{\operatorname{lt}} 
\newcommand\pt{\operatorname{pt}} 
\newcommand\ct{\operatorname{ct}} 
\newcommand\cl{\operatorname{cl}} 
\newcommand\cp{\operatorname{cp}} 
\newcommand\lspan{\operatorname{span}} 
\newcommand\mplc{\operatorname{mplc}} 
\newcommand\ctc{\operatorname{ctc}} 
\newcommand\clc{\operatorname{clc}} 
\newcommand\cpc{\operatorname{cpc}}
\newcommand\ZZ{\mathbb{Z}}
\newcommand\QQ{\mathbb{Q}}
\newcommand\RR{\mathbb{R}}
\newcommand\FF{\mathbb{F}}
\newcommand\cH{\mathcal{H}}
\newcommand\cI{\mathcal{I}}
\newcommand\Def[1]{{\bf #1}}
\newcommand\Zono{\mathcal{Z}}
\newcommand\Pol{\mathcal{P}}
\newcommand{\zono}{\operatorname{\mathcal{Z}(M(G))}}
\newcommand{\cozono}{\operatorname{\mathcal{Z}(M(G)^{\triangle})}}
\newcommand{\tocyc}{\operatorname{\mathcal{T}}}
\newcommand\ba{\mathbf{a}}
\newcommand\bb{\mathbf{b}}
\newcommand\be{\mathbf{e}}
\newcommand\bu{\mathbf{u}}
\newcommand\bv{\mathbf{v}}
\newcommand\bw{\mathbf{w}}
\newcommand\bx{\mathbf{x}}
\newcommand\by{\mathbf{y}}
\newcommand\bA{\mathbf{A}}
\newcommand\bB{\mathbf{B}}
\newcommand\bD{\mathbf{D}}
\newcommand\bF{\mathbf{F}}
\newcommand\bI{\mathbf{I}}
\newcommand\bJ{\mathbf{J}}
\newcommand\bS{\mathbf{S}}
\newcommand\bR{\mathbf{R}}
\newcommand\bT{\mathbf{T}}
\newcommand\bW{\mathbf{W}}
\newcommand\bzero{\mathbf{0}}
\newcommand\nn{n} %number of nodes in the graph
\newcommand\ned{m} %number of edges in the graph
\DeclareMathOperator{\kernel}{kern}
\newcommand{\matroid}{M}
\newcommand{\dualmatroid}{M^\triangle}
\newcommand{\graph}{G}
\newcommand{\sgraph}{\Sigma} %for signed graphs
\newcommand{\ugraph}{\Gamma} %for underlying graph of a signed graph
\newcommand{\subsgraph}[1]{\sgraph({#1})} % for edge subraphs of signed graphs
\newcommand{\subugraph}[1]{\ugraph({#1})} % for the underlying edge subgraph
\newcommand{\g}{g} % index, relative volume, etc
\newcommand\compl[1]{\overline{#1}} % complement
\newcommand\lfrac[2]{\Large \sfrac{#1}{#2}} % hack for \sfrac in displaystyle
\DeclareMathOperator{\rank}{rank}
\newcommand\rspan{\mathrm{span}_\RR}
\newcommand\zspan{\mathrm{span}_\ZZ}
\newcommand{\matt}[1]{{\color{blue} \sf  Matt: [#1]}}
\newcommand{\sophie}[1]{{\color{teal} \sf Sophie: [#1]}}
\newcommand{\nexttodo}[1]{{\color{red} \sf Next To Do: [#1]}}
\renewcommand\emptyset{\varnothing}
\newcommand\commentout[1]{}
\title{Acyclotopes, Tocyclotopes, and their Ehrhart Polynomials}
\author{Eleon Bach}
\address{Institut f\"{u}r Mathematik\\Technische Universit\"{a}t M\"{u}nchen }
\email{eleon.bach@tum.de}
\author{Matthias Beck}
\address{Department of Mathematics\\
         San Francisco State University
         }
\email{mattbeck@sfsu.edu}
\author{Sophie Rehberg}
\address{Département de Mathématiques \\ Université du Québec à Montréal}
\email{rehberg.sophie@uqam.ca}
\begin{document}

\keywords{Zonotope, hyperplane arrangement, oriented matroid, Ehrhart polynomial, Coxeter permutahedron, graphical zonotope, acyclotope, tocyclotope, derived zonotope, lattice Gale zonotope,
arithmetic matroid.}

\subjclass[2010]{Primary 52B20; Secondary 05C22, 52B35, 52C07, 52C40.}
% 05C22   	Signed and weighted graphs
% 52B20   	Lattice polytopes in convex geometry
% 52B35   	Gale and other diagrams
% 52C07   	Lattices and convex bodies in $n$ dimensions
% 52C40   	Oriented matroids in discrete geometry

\thanks{We thank 
Federico Ardila, 
Gennadiy Averkov,
Florian Frick, 
Christian Haase, 
Martin Henk,
Raman Sanyal,
and an anonymous referee for helpful comments.
The third author would like to thank the Berlin Mathematical School for funding research stays during which part of this work was done,
funded by the Deutsche Forschungsgemeinschaft (DFG, German Research
Foundation) under Germany's Excellence Strategy-–-the Berlin Mathematics
Research Center MATH+ (EXC-2046/1, project ID: 390685689).
}

\date{9 June 2026}

\begin{abstract}  
There is a well-established dictionary between zonotopes, hyperplane arrangements, and their (oriented) matroids. 
Arguably one of the most famous examples is the class of graphical zonotopes, also called
acyclotopes, which encode Minkowski summands of the type-A permutahedron.
Stanley (1991) gave a general interpretation of the coefficients of the Ehrhart polynomial
(integer-point counting function for a polytope) of a zonotope via linearly independent
subsets of its generators. Applying this to the graphical case shows that Ehrhart coefficients count  forests of the graph of fixed sizes.
Our first goal is to extend and popularize this story to other root systems, which on the
combinatorial side is encoded by signed graphs analogously to the work by Greene and Zaslavsky (1983).
We compute the Ehrhart polynomial of the acyclotope in the signed case.
Our second goal is to translate matroid duality concepts not only to hyperplanes but also to zonotopes in such a way that the Ehrhart polynomial is uniquely defined.
In the case of (signed) graphs we construct tocyclotopes and compute their Ehrhart polynomials.
Applying the same duality construction to a general integral matrix gives rise to a lattice
Gale zonotope, whose face structure was studied by McMullen (1971) and whose duality concepts
give a special instance of D'Adderio--Moci's arithmetic matroids (2013). We describe its Ehrhart
polynomials in terms of the given matrix.
\end{abstract}

\maketitle

% --------------------

\section{Introduction}\label{sec:intro}

For every (oriented) graph $\graph =(V,E)$, we consider the following triple: First,  there exists an associated matroid $\matroid(\graph) = (E, \cI)$ whose ground set is $E$ and whose set of independent sets $\cI$ is the set of trees in $\graph$.
Second, for every graph there exists an induced (central) hyperplane arrangement whose normal vectors are the columns of the incidence matrix $\bA_\graph \coloneqq [ \be_j - \be_k : \, jk \in E ] \in \RR^{\nn \times \ned}$ of $\graph$ where we denote by $\be_j$ the standard basis vector in $\RR^\nn$ corresponding to the node $j$.
Here $\nn = |V|$, $\ned = |E|$.
We call this hyperplane arrangement the \Def{graphic arrangement} $\cH(\bA_\graph)$. 
Third,
there is a well-developed dictionary between the \Def{zonotope} generated by $\bA_\graph$,
\[
  \Zono(\bA_\graph) := \bA_\graph \, [0,1]^\ned ,
\]
and the graphic hyperplane arrangement. 
% We call this zonotope \Def{acyclotope}. 
Greene and Zaslavsky~\cite{ZasGre} showed that the vertices of $\Zono(\bA_\graph)$  and equivalently, the regions of $\cH(\bA_\graph)$ are in one-to-one correspondence with the acyclic orientations\footnote{
An orientation is called \Def{acyclic} if it does not contain any coherently oriented cycles.
}
of $\graph$, and they gave analogous interpretations for all faces of~$\Zono(\bA_\graph)$. Zaslavsky
thus coined the charming term \Def{acyclotope} for $\Zono(\bA_\graph)$, one that we would like to revitalize.
Zaslavsky \cite[Section 4]{zaslavskysignedcoloring} introduced the acyclotope, in fact, in the more general setting of signed graphs, which is one that we will turn to later. 

In fact, the dictionary described for graphs above can be phrased for general matrices $\bA\in\RR^{\nn\times\ned}$, that is, for general representable matroids (over $\RR$), hyperplane arrangements $\cH(\bA)$, and zonotopes $\Zono(\bA)$.

% Zonotopes, in general, admit even more combinatorial
% \sophie{do we want to say ``combinatorial'' data or rather ``arithmetic data''? we could also just say ``interesting'' data?}
% data than the associated hyperplane arrangements. Each $\Zono(\bA)$ comes with a natural tiling into
% parallelepipeds~\cite{mcmullernzonotopes,shephardzonotopes}, whose (relative) volumes
% encode important arithmetic data of $\Zono(\bA)$.
% When $\bA \in \ZZ^{n \times m}$, i.e.,
% $\Zono(\bA)$ is a \Def{lattice zonotope},
% they are most easily packaged into the \Def{Ehrhart polynomial} of $\Zono(\bA)$, defined as
% \[
%   \ehr_{\Zono(\bA)}(t) := \left| t \Zono(\bA) \cap \ZZ^n \right| ,
% \]
% for positive integers $t$. \sophie{reorder and add stanley theorem here}
% In the above case that $\bA = \bA_\graph$ stems from a graph $\graph$, Stanley~\cite{MR1116376} proved that the coefficient of $t^j$ in the Ehrhart polynomial of $\Zono(\bA_\graph)$ equals the number of  forests in $\graph$ with $j$ edges.
% Ehrhart polynomials can be defined for any \Def{lattice polytope}, i.e., the convex hull of
% finitely many points in $\ZZ^n$, and there is a rich theory and many applications to these
% counting functions, including the topics discussed in this paper; see, e.g., \cite{reciprbook}.
Zonotopes admit additional arithmetic
data compared to the associated hyperplane arrangements. Each $\Zono(\bA)$ comes with a natural tiling into
parallelepipeds~\cite{mcmullernzonotopes,shephardzonotopes}, whose (relative) volumes
encode important arithmetic data of $\Zono(\bA)$.
When $\bA \in \ZZ^{n \times m}$, i.e., $\Zono(\bA)$ is a \Def{lattice zonotope},
they are most easily packaged into the \Def{Ehrhart polynomial} of $\Zono(\bA)$.
For a general lattice polytope $\Pol\subset\RR^n$, i.e., the convex hull of
finitely many points in $\ZZ^n$,  the Ehrhart polynomial is defined as 
\begin{equation}
  \ehr_{\Pol}(t) \coloneqq \left| t \Pol \cap \ZZ^n \right| \,,
\end{equation}
for positive integers $t$.  There is a rich theory and many applications to these
counting functions, including the topics discussed in this paper; see, e.g., \cite{reciprbook}.
For lattice zonotopes Stanley \cite{MR1116376} proved the following result.
\begin{theorem}[Stanley]\label{thm:stanley}
% \sophie{we need this theorem further up, since we changed the order of sections...}
Let $\mathcal{Z}$ be a zonotope generated by the integer vectors $\ba_1, \dots, \ba_m \in
\ZZ^n$. Then the Ehrhart polynomial of $\mathcal{Z}$ equals 
\begin{align}
    \ehr_{\mathcal{Z}}(t) = \sum_\bF \g(\bF) \, t^{|\bF|}
\end{align}
where $\bF$ ranges over all linearly independent subsets of $\{ \ba_1, \dots, \ba_m \}$ and $\g(\bF)$ is 
the (relative) volume of the parallelepiped spanned by the vectors in $\bF$, or equivalently
it is the greatest common divisor of all minors of size $|\bF|$ of the matrix whose columns are the elements of~$\bF$.
\end{theorem}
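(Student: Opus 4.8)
The plan is to exploit the canonical half-open tiling of a zonotope by parallelepipeds and reduce the lattice-point count to a computation on each tile. Recall from McMullen and Shephard that $\mathcal{Z}$ admits a decomposition as a disjoint union $\mathcal{Z} = \bigsqcup_\bF \Pi_\bF$, where $\bF$ ranges over the linearly independent subsets of $\{\ba_1, \dots, \ba_m\}$ and each $\Pi_\bF$ is obtained by translating the half-open parallelepiped $\Pi_\bF^0 := \{ \sum_{\ba_i \in \bF} \lambda_i \ba_i : 0 \le \lambda_i < 1 \}$ by a sum of some subset of the generators $\ba_1, \dots, \ba_m$. Concretely, one fixes a generic linear functional and assigns to each point of $\mathcal{Z}$ its lexicographically minimal representation; the set of generators that are ``tight'' at a point indexes the tile containing it, and the translation vector is the integer vector $\sum_{i \in S} \ba_i$ recording the generators pushed to their upper endpoint. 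The two features I need from this are that the index set is exactly the collection of linearly independent subsets and that every translation vector lies in $\ZZ^n$.

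First I would observe that this decomposition is homogeneous under dilation, so that $t\mathcal{Z} = \bigsqcup_\bF t\Pi_\bF$ for every positive integer $t$, whence $\ehr_{\mathcal{Z}}(t) = \sum_\bF |t\Pi_\bF \cap \ZZ^n|$. Because each $\Pi_\bF$ is an integer translate of $\Pi_\bF^0$ and integer translation preserves lattice-point counts, it suffices to evaluate $|t \Pi_\bF^0 \cap \ZZ^n|$.

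The core computation is the lemma $|t\Pi_\bF^0 \cap \ZZ^n| = \g(\bF)\, t^{|\bF|}$. The half-open parallelepiped $\Pi_\bF^0$ is a fundamental domain for the action of the lattice $\zspan(\bF)$ on the subspace $\rspan(\bF)$, so $|\Pi_\bF^0 \cap \ZZ^n|$ equals the index $[\,\ZZ^n \cap \rspan(\bF) : \zspan(\bF)\,]$. By Smith normal form this index is precisely $\g(\bF)$, the greatest common divisor of the maximal minors of the matrix whose columns are the elements of $\bF$ (it equals the product of the elementary divisors, i.e.\ the top determinantal divisor). For the dilate one writes $t\Pi_\bF^0$ as the disjoint union of the $t^{|\bF|}$ translates of $\Pi_\bF^0$ by the vectors $\sum_{\ba_i \in \bF} c_i \ba_i$ with $c_i \in \{0, 1, \dots, t-1\}$; each such translation is by an element of $\zspan(\bF) \subseteq \ZZ^n$ and hence preserves the count $\g(\bF)$, giving $\g(\bF)\, t^{|\bF|}$ in total. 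Summing over all $\bF$ then yields the claimed formula.

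The main obstacle is establishing the half-open decomposition together with its two key properties: that the tiles are indexed exactly by the linearly independent subsets and that the translations are by integer (indeed generator-sum) vectors. This is the geometric heart of the argument, and the step I would treat most carefully (or cite in full from McMullen and Shephard). Once it is in hand, the per-tile index computation and the summation are routine.
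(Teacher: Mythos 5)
Your proof is correct and takes essentially the same route as the source: the paper states this theorem without proof, citing Stanley \cite{MR1116376}, and Stanley's own argument is exactly yours --- the Shephard--McMullen half-open parallelepiped decomposition indexed by linearly independent subsets, followed by the per-tile lattice count $\lvert t\Pi_\bF^0 \cap \ZZ^n\rvert = \g(\bF)\,t^{|\bF|}$ via the Smith-normal-form index computation (which the paper itself records in Remark~\ref{remark:mofs2}). No gaps; the one step you flag as delicate, the integrality of the translation vectors in the tiling, is indeed the correct point to cite carefully from the literature.
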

In Remark~\ref{remark:mofs2} below we will discuss the quantities $\g(\bF)$ in more detail. 
In the above case that $\bA = \bA_\graph$ stems from a graph $\graph$, Stanley~\cite{MR1116376} proved that the coefficient of $t^j$ in the Ehrhart polynomial of $\Zono(\bA_\graph)$ equals the number of  forests in $\graph$ with $j$ edges.

Each part of the above-mentioned triple comes with its own concepts, structures and natural generalizations, raising the questions if and how these translate along the triple. 
We now give an overview of the concepts, structures and generalizations which we investigate and state the corresponding results.

Note that for a graph $\graph$ the columns in $\bA_\graph$ form a subset of the so called root system of type A.
Hence our \emph{first} motivation for this paper is to describe the above story for subsets of root systems of type B/C/D and add in missing pieces. 

% \sophie{I'm not sure, I understand the following paragraph...} \eleo{I also struggled a lot writing down this
% one and don't like it either. But I just don't find out how to improve it...} \matt{... so maybe we should take
% it out?}
% We start at the end of the triple with the zonotopes 
% that we construct when turning to the generalizations mentioned below. 
% A special appeal of these zonotopes is the additional combinatorial data encoded in their Ehrhart polynomials. In the following we will state the results for the Ehrhart polynomials that we compute. 

% Second, we turn to graphic arrangements which, as we recall, are induced by oriented graphs.
% We define signed graphs $\sgraph$,  their incidence matrices $\bA_\sgraph$ and  explain the details for this encoding in \Cref{ssec:signedgraphs} below.
% 
In the following, we will
phrase this setting in the language of root systems; see, e.g., \cite{humphreys_reflection_1990} for an introduction.
Root systems arise in the context of Lie theory
% 
% classifications and exceptional types...
% For the proof of this classification theorem or for further background on root systems see e.g. \sophie{different reference?\cite{zaslavskygeometryrootsystems}} and the references therein. 
% 
and are fully classified into five exceptional root systems and the following four infinite families of irreducible root systems:
% The classical root systems $A_{n-1}, B_n, C_n$, and $D_n$, which are traditionally represented in $\RR^n$ as the vector sets
\begin{itemize}
\item $A_{n-1} =\{\pm(\bb_i - \bb_j) \}_{i \neq j} $,
\item $B_n =  \{\pm(\bb_i - \bb_j) \}_{i \neq j} \cup \{ \pm (\bb_i + \bb_j) \}_{i \neq j} \cup \{ \pm \bb_i  \} $, 
\item $C_n =  \{\pm(\bb_i - \bb_j) \}_{i \neq j} \cup \{ \pm (\bb_i + \bb_j) \}_{i \neq j} \cup \{ \pm 2 \bb_i  \},$ 
\item $D_n =  \{\pm(\bb_i - \bb_j) \}_{i \neq j} \cup \{ \pm (\bb_i + \bb_j) \}_{i \neq j}$, 
\end{itemize}
where $\bb_1, \bb_2, \dots,\bb_n$ form an orthonormal basis of $\RR^n$. 
Root systems have associated hyperplane arrangements by taking the roots as the hyperplane normals.
% Studying combinatorial data of those hyperplane arrangements such as the number of regions was one of the motivations to introduce signed graphs.

% \sophie{mention hyperplane arrangements and regions ?}
% 
The
generators $\bA_G = [ \be_j - \be_k : \, jk \in E ]$ of the acyclotope for an ordinary graph
$\graph = (V, E)$ form a subset of a root system of type A, and consequently, the acyclotope is
a Minkowski summand of the permutahedron; the graphic
arrangement, in parallel, arises from a subset of a root system of type~A.

Some combinatorial data of these hyperplane arrangements such as the number of regions, can be studied in terms of (signed) graphs \cite{zaslavskygeometryrootsystems,ZasGre}.
Signed graphs were introduced as a combinatorial model for subsets of root systems in Coxeter type B/C/D. 
They originated in the social sciences and have found applications also in biology, physics, computer science, and economics; see \cite{zaslavsky2012mathematical} for a comprehensive bibliography.

A \Def{signed graph} $\sgraph = (\ugraph,\sigma)$ consists of a graph $\ugraph = (V,E)$ and a
signature $\sigma$.
In the underlying graph $\ugraph$, the edge set $E$ may contain besides the usual, potentially multiple, links (two distinct endpoints) and loops (two endpoints that are the same), also halfedges (with only one endpoint) and loose edges (no endpoints), though the latter play no role in our work.
% The set of edges $E$ consists of links, loops, halfedges, and loose edges.
The signature $\sigma$ assigns each link and loop of $\ugraph$ either $+$ or $-$.
An ordinary graph can be realized by a signed graph all of whose edges are labeled with~$+$.

Signed graphs represent root systems as follows.\footnote{
This correspondence is one reason to leave out positive loops and loose edges when building
the incidence matrix; neither play a role in our work.
}  
As in the (unsigned) graph case, we can define an incidence matrix;
the precise definitions can be found in \Cref{sec:signedacyclotopes}; see also \Cref{fig:orientededges}.
An \Def{incidence matrix} of a given signed graph $\sgraph$ with $n$ nodes and $m$ edges is an
$n \times m$ matrix $\bA_\sgraph$ whose column corresponding to the edge $e$ equals
\begin{itemize}
\item $\be_j - \be_k$ or $\be_k - \be_j$ if $e$ is a positive link with endpoints $j$ and $k$,
\item $\be_j + \be_k$ or $-\be_k - \be_j$ if $e$ is a negative link with endpoints $j$ and $k$,
\item $\be_j$ if $e$ is a halfedge at $j$,
\item $2 \be_j$ or $-2 \be_j$ if $e$ is a negative loop at $j$.
\end{itemize}
The choices in the above list correspond to choosing a biorientation of $\sgraph$, in analogy
with $\bA_\graph$ depending on an orientation of $\graph$. In both cases, the combinatorial and
arithmetic data that we will compute are independent of the chosen (bi-)orientation.
Hence, a subset of roots of type A is captured by a graph consisting of links with all positive signature.
In order to encode subsets of type D we need to add links with negative signs, 
for type B we introduce halfedges and for type C we use negative loops.
Note that we will also consider signed graphs with halfedges and loops, which correspond to encoding a subset of roots of type B and C.

Parallel to the graphic case, we define the \Def{acyclotope} corresponding to a signed graph $\sgraph$ as
the zonotope $\Zono(\bA_\sgraph)$. 
Zaslavsky's original definition of the acyclotope in \cite[Section 4]{zaslavskyorientationsignedgraphs} is slightly different: 
 it is a centrally symmetric version, which is homothetic to our definition as it is a translation of the second dilate. 
  We chose the above definition because it features more natural arithmetic
properties and specializes to the case of unsigned graphs.

 Thus the acyclotope is a Minkowski summand of the respective (integral) Coxeter permutahedron
$\Pi^\ZZ(\Phi) := \sum_{\alpha \in \Phi^+} [0,\alpha]$
of the finite root system $\Phi$ with a choice $\Phi^+$ of positive roots, see,
e.g.,~\cite{ardila_arithmetic_2020}.

As already mentioned, the triple graphic matroids/graphic hyperplane arrangements/acyclotopes is phrased in terms of root systems of type A. In this paper we investigate the analogue triple for type B/C/D root systems and investigate the analogues for the mentioned structural results. 
The face structure of the acyclotope
in this setting goes back to the same Greene--Zaslavsky paper mentioned above~\cite{ZasGre}.
For the Ehrhart polynomial of acyclotopes we show the following result, extending results in \cite{ardila_arithmetic_2020}.
\begin{reptheorem}{thm:signedehrhart}
 The Ehrhart polynomial of the acyclotope $\Zono(\bA_\sgraph)$ equals
 \begin{equation}
  \ehr_{\Zono(\bA_\sgraph)}(t)
%   =\ehr_{\acyc(\sgraph)}(t)
  = \sum_{F} 2^{\pc(F)+\lc(F)} \, t^{\nn-\tc(F)}\,,
 \end{equation}
%  \sophie{check $|V(F)|$ vs $\nn$ aka does $F$ contain all the nodes of $\ugraph$ and what precisely does $\tc(F)$ count?}
 where the sum is over all $F \subseteq E$ such that $\subsgraph{F}$ is a pseudo-forest.
%  , $\pc(F)$ equals the number of components in $F$ containing a single unbalanced circle (not a loop), $\lc(F)$ equals the number of components in $F$ containing a single negative loop, and $\tc(F)$ equals the number of components in $\subsgraph{F}$ that are signed trees without halfedges, loops or circles.
\end{reptheorem}

% \footnote{
% Here we mean what might be more precisely called the \emph{integral Coxeter permutahedron}
% $\Pi^\ZZ(\Phi) := \sum_{\alpha \in \Phi^+} [0,\alpha]$
% of the finite root system $\Phi$ with a choice $\Phi^+$ of positive roots.
% }

% While the combinatorics of root systems in type A can be captured by graphs, for types B/C/D the combinatorial model was extended to signed graphs by Zaslavsky \cite{zaslavskygeometryrootsystems}.

%As already mentioned, these are root vectors of type B/C/D.

Our \emph{second} motivation is to investigate the analogues of the above results under duality. 
% Our third goal is to investigate the analogues of the above results under duality. 
Generalizing the concept of duality for plane graphs, Whitney \cite{whitney} 
introduced \Def{matroidal duality}.

Let $\matroid = (E, \cI)$ be a matroid on the ground set $E$ with independent sets collected
in $\cI$. Its \Def{dual matroid} $\dualmatroid\coloneqq (E,\cI^{\triangle})$ is
defined via
\begin{align}
    \cI^{\triangle} \coloneqq \left\{ J \subseteq E \ | \ E \setminus J \ \text{is a
spanning set of } M \right\} ,
\end{align}
where a subset $S\subseteq E$ is called \Def{spanning} if it contains a basis.
% (see, e.g. \cite[Theorem 39.2.]{SchrCh39}). 
% The bases of $M^{\triangle}$ are the complements of the bases of $M$. This implies $(M^{\triangle})^\triangle = M$ which justifies the name \emph{dual}.
For general background on matroids and terms that we will leave undefined, see,
e.g.,~\cite{welsh_matroid_2010}.
We are interested in the case that $M$ is \Def{representable} (over $\RR$), i.e., $S$ consists of the columns of a given matrix $\bA \in \RR^{n \times m}$ and independence refers to linear independence.
In this case, and under the (reasonable) assumption that $\bA$ has rank $n$, there is a well-known construction of a representation $\bA^\triangle$ for the dual matroid
 $M^\triangle$, see, e.g., \cite[Section~9.3]{welsh_matroid_2010}.
Namely, one uses elementary row operations on $\bA$ resulting in a matrix of the form $[\bW \, | \, \bI]
\in \RR^{n \times m}$ where we denote by $\bI$ the identity matrix of the appropriate dimension.
The matrix $[\bW \, | \, \bI]$ also represents $M$.
Now let $\bA^\triangle := [\bI \, | \, -\bW^{\top}] \in \RR^{ (m-n) \times m }$; by construction $\bA^\triangle$ represents $M^\triangle$.
Note that the rows in $\bA^\triangle$ form a basis for the kernel $\ker(\bA)$, so the described construction is a special case of what is known as \Def{Gale duality} or \Def{Gale transforms}.

Going back to the triple from the beginning, let us consider a matroid $\matroid$ representable over $\QQ$ with representation $\bA\in\ZZ^{\nn\times\ned}$, the corresponding hyperplane arrangement $ \cH(\bA)$ and the lattice zonotope $\Zono(\bA)$.
For all three objects we can construct their duals: the dual matroid $ \matroid^\triangle$ with representation $\bA^\triangle$, the corresponding hyperplane arrangement $\cH(\bA^\triangle)$ and zonotope $\Zono(\bA^\triangle)$.
McMullen~\cite{mcmullernzonotopes} studied the face structure of  $\Zono(\bA^\triangle)$ and showed that it is uniquely described in terms of combinatorial data from the matroid $\matroid$.
However, in general the dual representation $\bA^\triangle$ may not be integral, its construction involves a
number of choices, and therefore the Ehrhart polynomial of the dual zonotope  $\Zono(\bA^\triangle)$ is a priori not uniquely defined.
% We will start solving this problem for the special case of graphs in \Cref{sec:graphtocyclotopes}.

% The interplay of $\Zono(\bA)$ and $\Zono(\bA^\triangle)$, in particular in regards to face structure, was studied in~\cite{mcmullernzonotopes}. 
We are particularly interested in the case when $\bA = \bA_\sgraph$ or $\bA = \bA_\graph$ is the incidence matrix of a (signed) graph, i.e., consists of roots of type A/B/C/D. 
% Note that the construction of $\bA_\sgraph^\triangle$ involves choices which are irrelevant for the combinatorial data (depending only on the matroid) but very much have an influence on the arithmetic data (volumes of zonotope tiles and consequently the Ehrhart polynomial) in some instances. 
% 
We construct and investigate the properties of the corresponding triple for root systems of type A/B/C/D and introduce the \Def{tocyclotope} as the (signed-)graphic dual construction of the acyclotope.
The face structure of the tocyclotope in the case of an ordinary graph can once more be found in~\cite{ZasGre} in terms of the hyperplane arrangement and the name comes from the fact that their vertices corresponds to the totally cyclic
orientations\footnote{
An orientation is called \Def{totally cyclic} if every edge belongs to a coherently
oriented cycle.
}. 

% ---------------------------------------
% 
% We do so for graphs in \Cref{sec:graphtocyclotopes} and for signed graphs in \Cref{sec:signedtoyclotopes}.
% % Third, the construction and arithmetic of the tocyclotope suggest a general duality
% % concept for zonotopes, one that was already employed by McMullen~\cite{mcmullernzonotopes}:
% Along the way (see \Cref{sec:latticegaledual}), the construction and arithmetic of the tocyclotope suggest a general duality
% construction for zonotopes, one that was already employed by
% McMullen~\cite{mcmullernzonotopes} and D'Adderio--Moci~\cite{dadderiomoci,dadderio_arithmetic_2013}:
% starting with $\bA \in \RR^{n \times m}$, construct a matrix that represents the matroid dual
% to that of $\bA$; 
% % the face structure of its associated zonotope can be described entirely
% % from the data of $\bA$. We extend McMullen's construction and results (in the case that
% % $\bA \in \ZZ^{n \times m}$) to also describe the resulting Ehrhart polynomial in terms of~$\bA$.
% % to that of $\bA$;
% McMullen described the face structure of its associated zonotope entirely
% from the data of $\bA$, and D'Adderio--Moci developed the general notion of an
% \emph{arithmetic matroid}, whose duality concepts apply here. In this context, we give a
% concrete computation that describes the Ehrhart polynomial of the zonotope of the dual
% arithmetic matroid in terms of~$\bA$.
% 
% --------------------------------

In \Cref{sec:graphtocyclotopes}, we describe a combinatorial construction of the tocyclotope based on the fact that the incidence matrix $\bA_\graph$ of a graph $\graph$ is totally unimodular and compute its Ehrhart polynomial:
%Theorem~\ref{thm:cographicehrhart} gives the Ehrhart polynomial of a tocyclotope; and indeed, its coefficients enumerate (complements of) spanning sets in~$G$:
\begin{reptheorem}{thm:cographicehrhart}
Let $\graph=(V,E)$ be a simple and connected
% , and bridgeless\footnote{
% A bridge is also called an \textit{isthmus}. In fact, Tom would not easily accept any other term.
% } 
graph. Then the Ehrhart polynomial of the tocyclotope $\Zono(\bA_G^\triangle)$ is 
\begin{align}
    \ehr_{\Zono(\bA_G^\triangle)}(t)
    = \sum_{k=0}^{m-n+1} d_k \, t^{k}
\end{align}
where the coefficient $d_k$ equals the number of complements of spanning sets in~$\graph$ of size~$k$, which equals the number of forests of size $m-k$ in $G^{\Delta}$.  
\end{reptheorem}
Furthermore, we complete the connections along the dual triple by commenting on how tocyclotopes connect to
flows on $\graph$ and cographic arrangements. 
% 
% \sophie{I would put this comment after the definition of the graphical tocyclotope...}\eleo{I don't really see
% where this would be, but sounds good to me.} \matt{to me, too :--)}
The translation of combinatorial data to the matroidal dual recently attracted attention in the area of scheduling periodic time tables~\cite{liebchen_modeling_2007}  and tropical geometry for the type-A case~\cite{bortoletto_tropical_2024}.
Here, integer points in  the tocyclotope of a graph correspond to cycle offsets of a periodic event scheduling problem.
% \sophie{maybe here?} \matt{fine by me...}

In  \Cref{sec:Steinitz} we push the triple even further to polytopes and consider the special case where
our graph is Steinitz, i.e., $3$-connected, simple and planar and hence the $1$-skeleton of a $3$-dimensional polytope. For this class of graphs polytopal duality, matroid duality and graph duality coincide;
in particular, acyclic orientations are dual to totally cyclic orientations on the underlying graph structure. 
% By Steinitz' Theorem for $3$-polytopes, a graph $\graph$ is the graph of a $3$-polytope $P$ if and only it is Steinitz. 
We construct the normal vectors of a cographic hyperplane arrangement as the column vectors of a boundary map stemming from the cellular chain complex associated to the CW-complex given by the face structure of~$P$.

However, in particular in the signed graph case, we stress again that it is a priori not clear how to ensure that the tocyclotope is a \emph{lattice} zonotope and that the Ehrhart polynomial is unique.
This is because the incidence matrix $\bA_\sgraph$ of signed graphs is not totally unimodular.
Therefore, we first turn to the more general setting of arbitrary lattice zonotopes in \Cref{sec:latticegaledual}.
The construction and arithmetic of the tocyclotope suggest a general duality
construction for zonotopes, one that was already employed by
McMullen~\cite{mcmullernzonotopes} and D'Adderio--Moci~\cite{dadderiomoci,dadderio_arithmetic_2013}:
starting with $\bA \in \RR^{n \times m}$, construct a matrix that represents the matroid dual to that of $\bA$; 
% the face structure of its associated zonotope can be described entirely
% from the data of $\bA$. We extend McMullen's construction and results (in the case that
% $\bA \in \ZZ^{n \times m}$) to also describe the resulting Ehrhart polynomial in terms of~$\bA$.
% to that of $\bA$;
McMullen described the face structure of its associated zonotope entirely
from the data of $\bA$, and D'Adderio--Moci developed the general notion of an
\emph{arithmetic matroid}, whose duality concepts apply here. 
% 
% \sophie{definition of lattice Gale dual here?}\eleo{Ich habe Bedenken, dass das zu wiederholend wird, aber ja, es wäre komisch, das nicht definiert zu haben.}
We define the \Def{lattice Gale dual} of $\bA$ as the matrix $\bD\in\ZZ^{\ned
\times (\ned-\nn)}$, which consists of a $\ned-{\nn}$ \emph{lattice} basis vectors of $\ker(\bA)\cap \ZZ^\ned$ as column vectors. 
% We give a concrete computation that describes the Ehrhart polynomial of the zonotope of the dual arithmetic matroid in terms of~$\bA$.
We give a concrete computation that describes the Ehrhart polynomial of the zonotope of the dual arithmetic matroid, the \Def{lattice Gale zonotope} $\Zono(\bD^{\top})\subset\RR^{\ned-\nn}$, in terms of $\bA$:
\begin{reptheorem}{thm:galezonehrhart}
Let $\bA\in \ZZ^{{\nn}\times \ned}$ be of rank $n$, with lattice Gale dual $\bD\in\ZZ^{\ned \times (\ned-\nn)}$.
%  Let $\bA\in\ZZ^{n\times m}$ and $\Zono(\bA)$ be the zonotope spanned by its columns.  Assume that $\bA$ has full rank $n$, i.e., the zonotope $\Zono(\bA)\subseteq\RR^n$ is full dimensional.
% Assume further that there is a selection $R$ of $n$ linearly independent columns in $A$ such that 
% Assume further that we can write $A=[R|S]$ so that $R$ is invertible and $R^{-1}S$ is integral.
% Then we can compute define the dual lattice zonotope as $\Zono([I|(R^{-1}S)^{\top}])$ and the Ehrhart polynomial can be computed as 
% Choose a lattice basis for $\ker(\bA)$ and write it as the $m-n$ columns in $\bD\in\ZZ^{n\times (m-n)}$.
Then we can compute the Ehrhart polynomial of the associated lattice Gale zonotope as
\begin{align}
 \ehr_{\Zono(\bD^{\top})}(t)=\sum_{S}\frac{\g(\bA_S)}{\g(\bA)}\ t^{m-\lvert S\rvert}
\end{align}
where the sum is over all spanning sets $S\subseteq[m]$ in the matroid represented by~$\bA$ and $\g(\bA)$ is defined as in \Cref{thm:stanley}.
\end{reptheorem}

Finally, in \Cref{sec:signedtoyclotopes}, we give an analogous tocyclotope construction for signed graphs, 
establish a connection to the Gale lattice dual and use this in order to guarantee uniqueness of the tocyclotope for signed graphs up to unimodular equivalence. Hence, we can compute the well-defined Ehrhart polynomial of the tocyclotope for signed graphs and obtain the following result which fits exactly into the duality framework.
\begin{reptheorem}{thm:signedtocyclotopeehr} %[Ehrhart polynomial of the tocyclotope] 
Let $\sgraph$ be a connected signed graph whose incidence matrix has full rank. 
Choose a connected basis $T\subseteq E$ that contains a halfedge if $\sgraph$ contains a halfedge.
Then the Ehrhart polynomial of the tocyclotope $\Zono([\bI \, |-\bB^{\top}])\subseteq\RR^{\ned-\nn}$ is
\begin{equation}
 \ehr_{\Zono([\bI \, |-\bB^{\top}])}(t)= \begin{cases}
                  \sum_{S} 2^{\mplc(S)} t^{\ned-\lvert S\rvert} & \text{if $\sgraph$ contains a halfedge,}\\
                  \sum_{S} 2^{\mplc(S)-1} t^{\ned-\lvert S\rvert} & \text{if $\sgraph$ does not contain a halfedge,}
                 \end{cases}
%                  \label{eq:tocyclotopeehrhart}
\end{equation}
where the sums run over all sets $S\subseteq E$ that contain a basis of $\sgraph$, 
% i.e., $\subsgraph{S}$ contains a maximal pseudo-forest of $\sgraph$,
and $\mplc(S)$ is some combinatorial data of the signed graph $\sgraph$ defined in detail below.
% \begin{equation}
%  \mplc(S)\coloneqq \min_{\substack{\tilde{T}\subseteq S}}
% %  \\ \tilde{T}\text{ is a max. pseudo-forest in }\sgraph}} 
%  \left(\pc(\tilde{T})+\lc(\tilde{T})\right)\,,
% \end{equation}
% with the minimum taken over all maximal pseudo-forests $\tilde{T}$ in $\sgraph$ contained in the spanning set $S$.
\end{reptheorem}

\section{Acyclotopes for signed graphs}\label{sec:signedacyclotopes}

In this section we will first recall some theory of signed graphs and then extend results in \cite{ardila_arithmetic_2020} to give graph-theoretic interpretations for the Ehrhart polynomial of the signed acyclotope. 
 
\subsection{Preliminaries: Signed graphs and their geometry}\label{ssec:signedgraphs}
We now elaborate on the construction of an incidence matrix of a signed graph, hinted at in
the introduction. To begin, we once more stress their relation to root vectors of type B/C/D;
excellent background references are \cite{zaslavskygeometryrootsystems,zaslavsky_matrices_2010}.

An \Def{orientation} of a signed graph $\sgraph = (\ugraph,\sigma)$ is an assignment $\tau$ from the set of node-edge incidences to $\{\pm\}$ such that $\sigma(e)=-\tau(e,v)\tau(e,u)$ for very edge $e=\{u,v\}$.
Equivalently, choosing a \Def{bidirection} $\tau: E \times V \to \{ \pm \}$ for an unsigned graph $\ugraph=(V,E)$ first and setting $\sigma(e)=-\tau(e,v)\tau(e,u)$ defines an oriented signed graph $\sgraph=(\ugraph, \sigma)$ with orientation $\tau$.
Hence, oriented signed graphs and bidirected graphs are equivalent objects.
We can interpret this as follows: 
if $\tau(e,v)=+$ the edge $e$ enters node $v$, i.e., the head of the node-edge incidence $(e,v)$ points towards $v$,
if $\tau(e,v)=-$ the edge $e$ exits node $v$, i.e., the head of the node-edge incidence
$(e,v)$ points away from $v$.\footnote{
Thus positive edges get oriented consistently with orienting an unsigned graph, whereas
negative edges get oriented by choosing one of the charming adjectives \emph{introverted} or
\emph{extroverted}.
}
See \Cref{fig:orientededges}.
% -------------------------------------

\begin{figure}[b]
\begin{subfigure}[t]{.55\textwidth}
  \centering
  \includegraphics[]{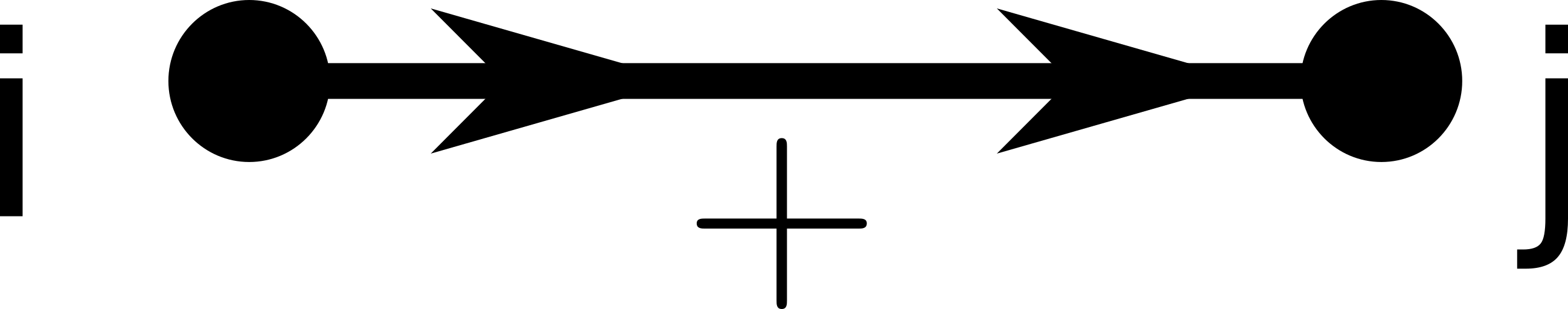}\hspace{5ex}
  \includegraphics[]{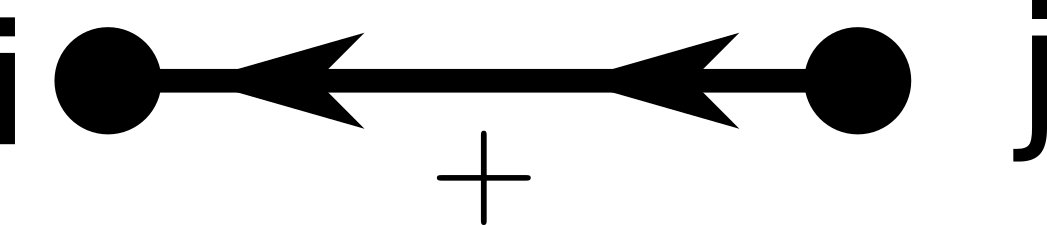}
\caption{positive link (bi)oriented from $i$ to $j$ (left) and from $j$ to $i$ (right), i.e., $\tau(\{i,j\}, i)=-$ and $\tau(\{i,j\}, j)=+$ (left) and with flipped signs on the right} 
\end{subfigure}\hfill%
\begin{subfigure}[t]{.423\textwidth}
  \centering
  \includegraphics[]{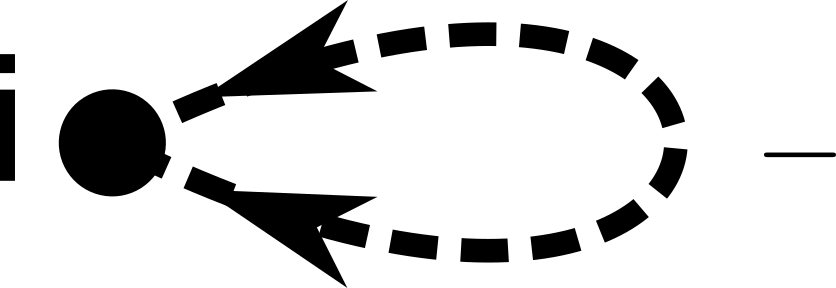}\hspace{4ex}
  \includegraphics[]{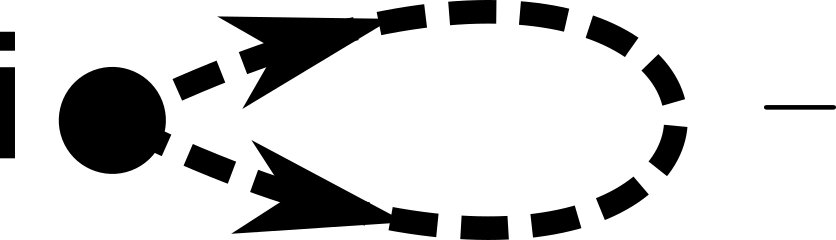}
\caption{negative loop (bi)oriented extroverted (left) and introverted (right), i.e., $\tau(\{i,i\}, i)=+$ (left) and $\tau(\{i,i\}, i)=-$ (right)}
\end{subfigure}

\vspace{2ex}

\begin{subfigure}[t]{.55\textwidth}
  \centering
  \includegraphics[]{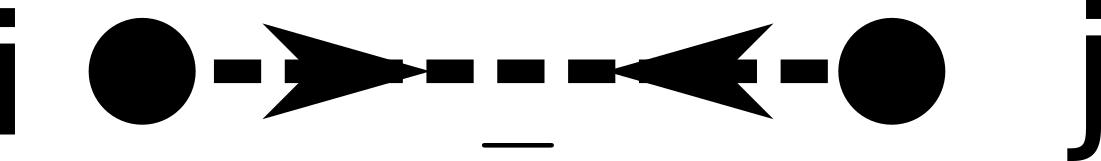}\hspace{5ex}
  \includegraphics[]{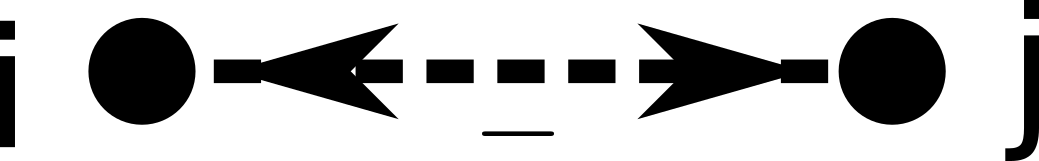}
\caption{negative link (bi)oriented introverted (left) and extroverted (right), i.e.,  $\tau(\{i,j\}, i)=-=\tau(\{i,j\}, j)$ (left) and with flipped signs on the right} %
\end{subfigure}\hfill%
\begin{subfigure}[t]{.42\textwidth}
  \centering
  \includegraphics[]{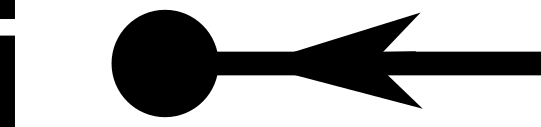}\hspace{4ex}
  \includegraphics[]{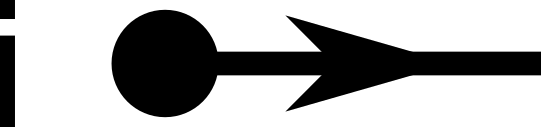}
\caption{halfedge (no signature) pointing towards~$i$, i.e., $\tau(\{i\},i)=+$ (left) and  pointing away from $i$, i.e., $\tau(\{i\},i)=-$ (right)} %
\end{subfigure}
 \caption{All different types of bioriented edges.} \label{fig:orientededges}
\end{figure}

% Vize versa, a balanced graph without halfedges can be considered an unsigned graph.\sophie{do we allow loops, loose edges, and multiple edges in the graph case?}
% 
% \begin{prop}[\sophie{some reference}]
%  If a signed graph has no unbalanced cycle \sophie{or circuit or circle?}, halfedges or negative loops (?) it can be converted into an all positive graph (a usual graph) by using switching operations.
%  
%  Properties preserved under switching...
% \end{prop}

For an oriented signed graph $\sgraph$ without positive loops or loose edges, we define the \Def{incidence matrix}
$\bA_\sgraph\in\RR^{\nn\times\ned}$ by 
% \matt{is there such a thing as an unsigned incidence
% matrix (of a signed graph)?}\sophie{probably not, Zaslavsky just calls them incidence
% matrices} \matt{then I vote to scrap ``signed''...}
\begin{align}
 (\bA_\sgraph)_{v,e}=\begin{cases}
          0 & \text{ if $v$ and $e$ are not incident,}\\
          +1 & \text{ if $e$ enters $v$, i.e., }\tau(v,e)=+\,,\\
          -1 & \text{ if $e$ exits $v$, i.e., }\tau(v,e)=-\,,\\
          \pm 2 & \text{ if $e$ is a negative loop at $v$ and }\tau(v,e)=\pm \text{, respectively.}
         \end{cases}
\end{align}
% \sophie{loose edges and positive loops would correspond to they zero columns, we omit them here...}
% Loose edges and positive loops would correspond to zero columns in the  incidence matrix and we omit them here.

% For a subset $S\subseteq E$ of the edges we define the \Def{sign of $S$} as the product of the signes of edges in the subset.
% We call a subset of the edges of a signed graph \Def{negative (positive)} if the sign of the subset is negative (positive).

% \Def{(closed, simple) path and walk?}

For a subset $R\subseteq E$ of edges of a signed graph $\sgraph=(\ugraph, \sigma)$ with $\ugraph=(V,E)$ we define the \Def{subgraph} $\subsgraph{R}$ to be the signed graph with the underlying graph $\subugraph{R}=(V,R)$ and the signature $\sigma$ restricted to $R$. 
% In \cite{zaslavskysignedgraphs} those graphs are called the restriction.
With this definition of subgraph, the incidence matrix of $\bA_{\subsgraph{R}}\eqqcolon\bR$ is precisely the matrix formed by columns of $\bA_\sgraph$ indexed by $R$.
We recall some further notions from the theory of signed graphs:
\begin{itemize}
   \item A \Def{path} is a sequence $(v_1,e_1,v_2,e_2,\dots,e_n,v_{n+1})$ of nodes $v_i$ and edges $e_i$, such that $e_i=\{v_i,v_{i+1}\}$ and all the edges and vertices in the sequences are pairwise distinct. 
%    \Def{simple}?
   \item A signed graph $\sgraph$ is \Def{connected} if there exists a path between any two nodes.
   \item A \Def{circle} is a closed path, i.e., a sequence $(v_1,e_1,v_2,e_2,\dots,e_n,v_{1})$ where only the first and last node are equal. A loop is a circle but we will usually treat loops separately from circles. 
%    a sequences no repeated edges and no repeated nodes except for beginning and start).  
% or equivalently a $2$-regular connected subgraph. \sophie{do we want/need to define the other words in this definition as well?} \sophie{I like the definition using $2$-regular, but this would imply that a node with two halfedges is a cycle, I don't think we want that...}
%  
    \item A \Def{signed tree} is a connected signed graph with no circles, loops, or halfedges.
          See \Cref{subfig:signed_tree}.
    \item A \Def{(signed) halfedge-tree} is a connected (signed) graph with no circles or loops, and a single halfedge.
    See \Cref{subfig:halfedge-tree}.
    \item  A \Def{(signed) loop-tree} is a connected (signed) graph with no circles or halfedges, and a single negative loop.
    See \Cref{subfig:loop-tree}.
    \item A \Def{(signed) pseudo-tree} is a connected (signed) graph with no loops or halfedges that contains a single circle with an odd number of negative edges.
    See \Cref{subfig:pseudo-tree}.
    \item A \Def{signed pseudo-forest} is a signed graph whose connected components are signed trees, signed halfedge-trees, signed loop-trees, or signed pseudo-trees.
    See \Cref{subfig:pseudo-forest}.
%     \sophie{could we merge loop-tree and pseudo-tree/negative-circle-tree?}
%     \item For a subset $R\subseteq E$ of edges of a signed graph $\sgraph=(\ugraph, \sigma)$ with $\ugraph=(V,E)$ we define the \Def{subgraph} $\subsgraph{R}$ to be the signed graph with the underlying graph $\subugraph{R}=(V,R)$ and the same signature $\sigma$ restricted to $R$.
    \item A \Def{circuit} is a subgraph with an inclusion minimal set of edges that is not a pseudo-forest. 
    For signed graphs this can be a circle with an even number of negative edges,
% (positive loops, loose edges,) 
and a \Def{handcuff}, i.e., a path (possibly consisting of only one node) that on each of its two (possibly identical) end-nodes is connected to a negative circle, halfedge, or negative loop. See \Cref{subfig:balanced_circle,subfig:handcuff,subfig:handcuff_degenerate}.
    \item A \Def{source (sink)} is a node $s$ with only outward (inward) pointing node-edge incidences, i.e., $\tau(s,e)=-$ ($\tau(s,e)=+$) for all edges $e$ incident to the source (sink) $s$.
    \item A \Def{(bidirected) cycle} is an oriented circuit that has neither sinks nor sources.
% (Positive loops and loose edges are also considered (bidirected) cycles.)
    \item An oriented signed graph is called \Def{acyclic} if it does not contain any cycles.
     It is called \Def{totally cyclic} if every edge is contained in a cycle.
\end{itemize}
\begin{figure}
 \begin{subfigure}[t]{.33\textwidth}
 \centering
  \includegraphics[width=.9\linewidth]{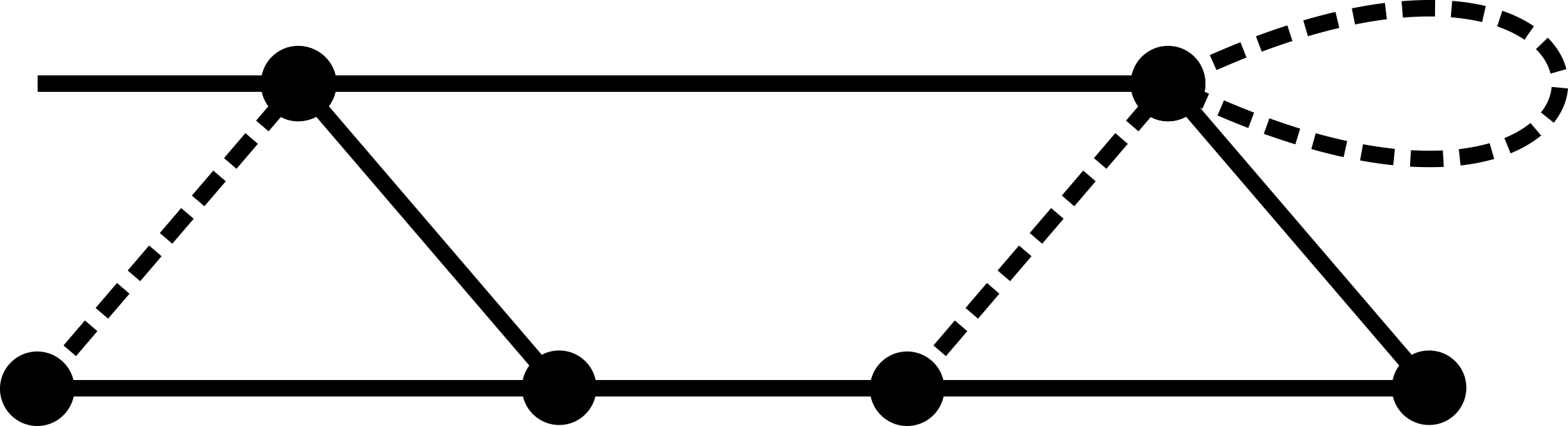}
  \caption{full graph }\label{subfig:signed_graph}
 \end{subfigure}\hfill%
 \begin{subfigure}[t]{.33\textwidth}
 \centering
  \includegraphics[width=.9\linewidth]{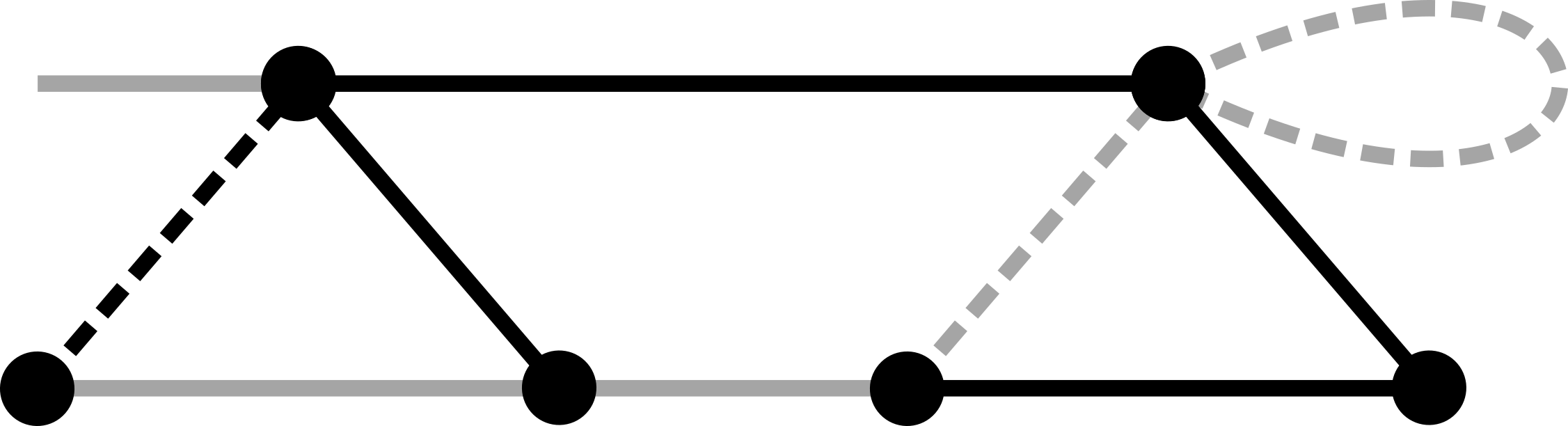}
  \caption{signed tree }\label{subfig:signed_tree}
 \end{subfigure}\hfill%
 \begin{subfigure}[t]{.33\textwidth}
 \centering
  \includegraphics[width=.9\linewidth]{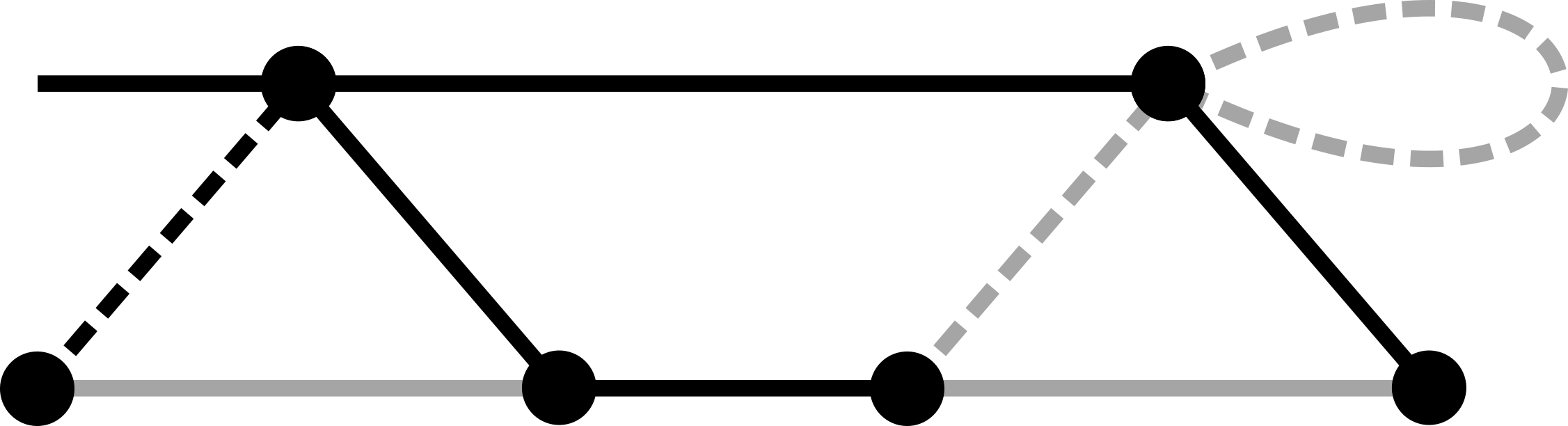}
  \caption{(signed) halfedge-tree}\label{subfig:halfedge-tree}
 \end{subfigure}

 \begin{subfigure}[t]{.33\textwidth}
 \centering
  \includegraphics[width=.9\linewidth]{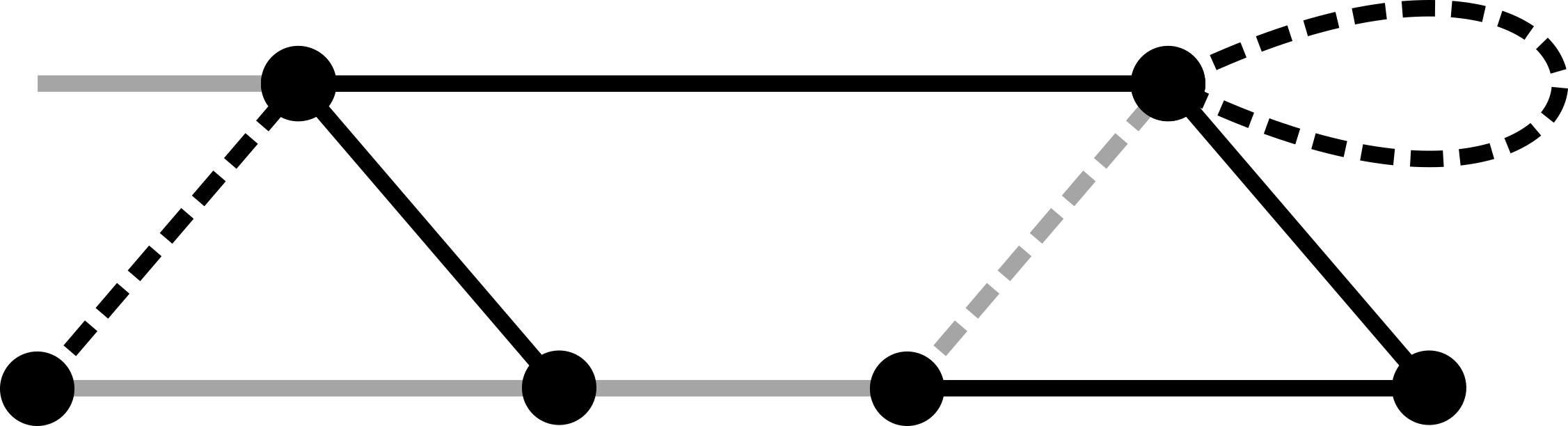}
  \caption{(signed) loop-tree}\label{subfig:loop-tree}
 \end{subfigure}\hfill%
  \begin{subfigure}[t]{.33\textwidth}
 \centering
  \includegraphics[width=.9\linewidth]{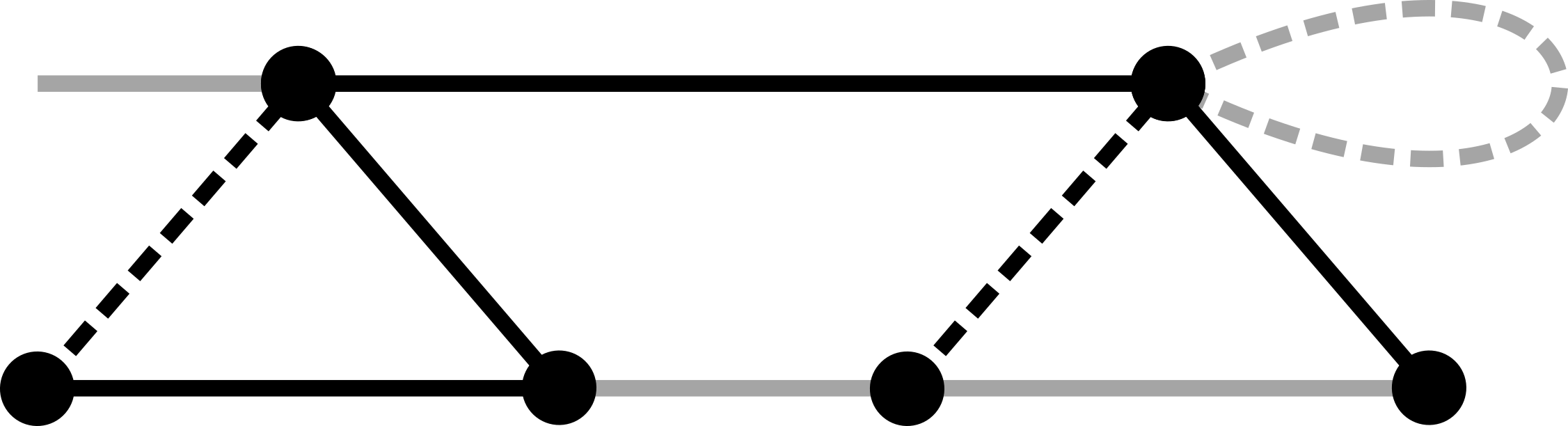}
  \caption{(signed) pseudo-tree}\label{subfig:pseudo-tree}
 \end{subfigure}\hfill%
  \begin{subfigure}[t]{.33\textwidth}
 \centering
  \includegraphics[width=.9\linewidth]{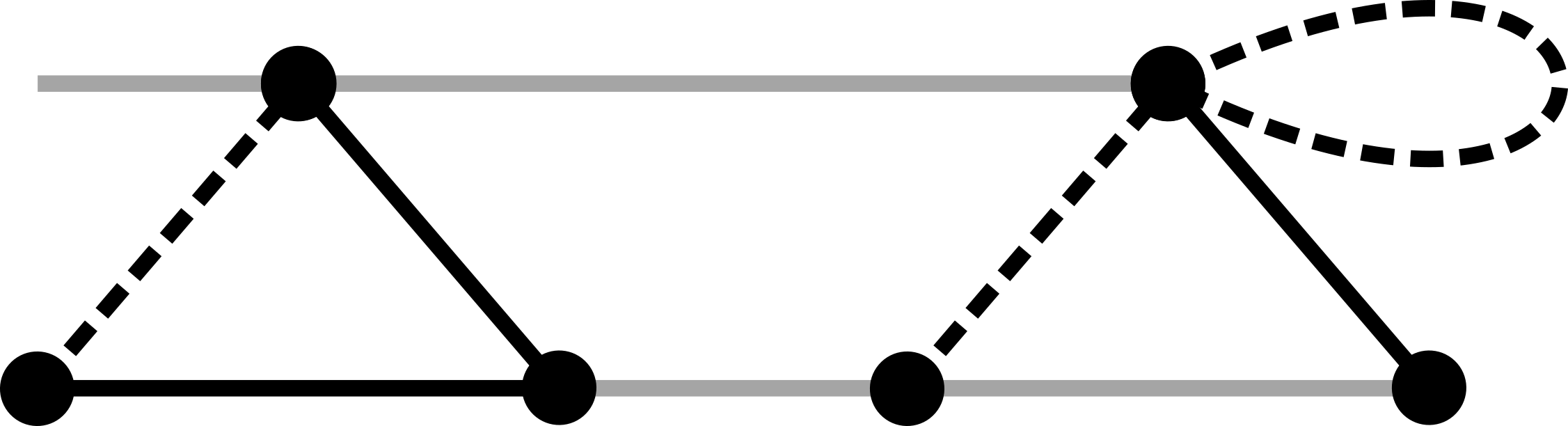}
  \caption{inclusion-maximal pseudo-forest }\label{subfig:pseudo-forest}
 \end{subfigure}
 
 \begin{subfigure}[t]{.33\textwidth}
 \centering
  \includegraphics[width=.9\linewidth]{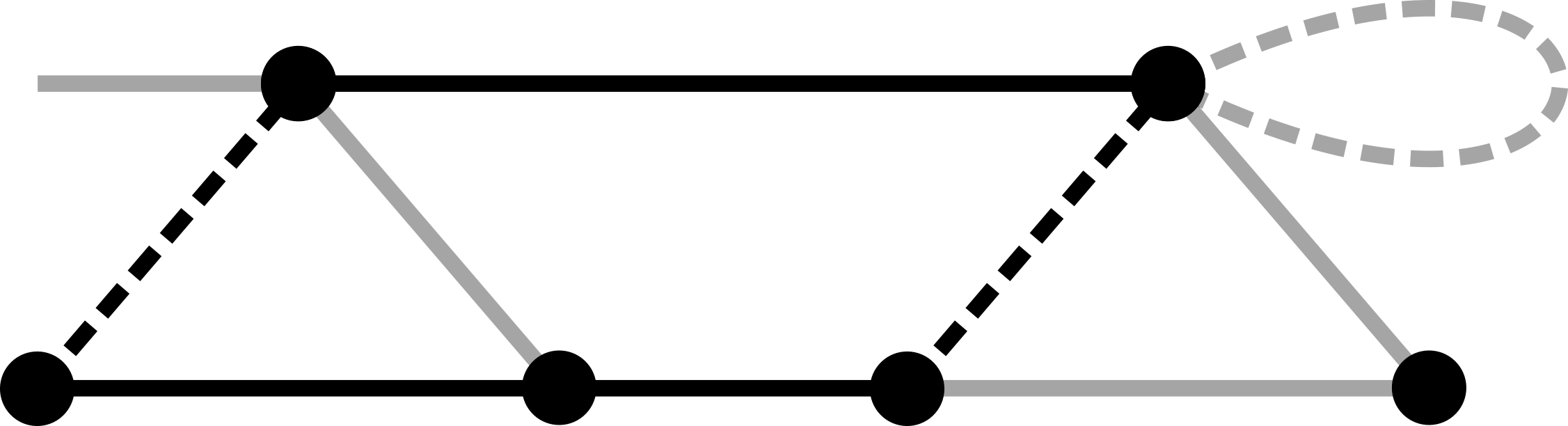}
  \caption{balanced circle}\label{subfig:balanced_circle}
 \end{subfigure}\hfill%
 \begin{subfigure}[t]{.33\textwidth}
 \centering
  \includegraphics[width=.9\linewidth]{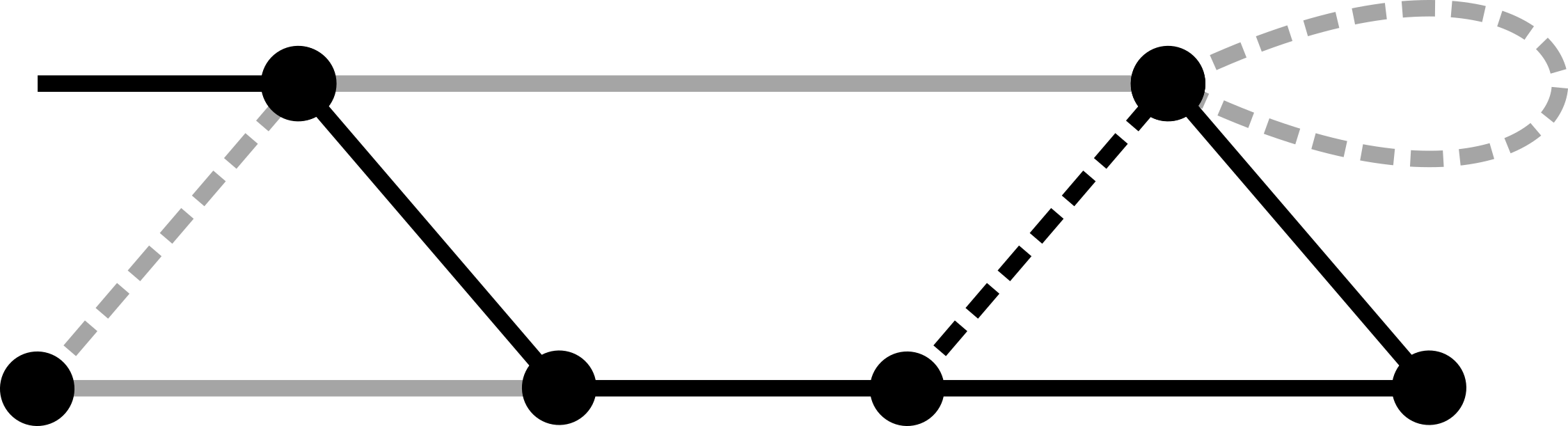}
  \caption{handcuff with  connecting path}\label{subfig:handcuff}
 \end{subfigure}\hfill%
 \begin{subfigure}[t]{.33\textwidth}
 \centering
  \includegraphics[width=.9\linewidth]{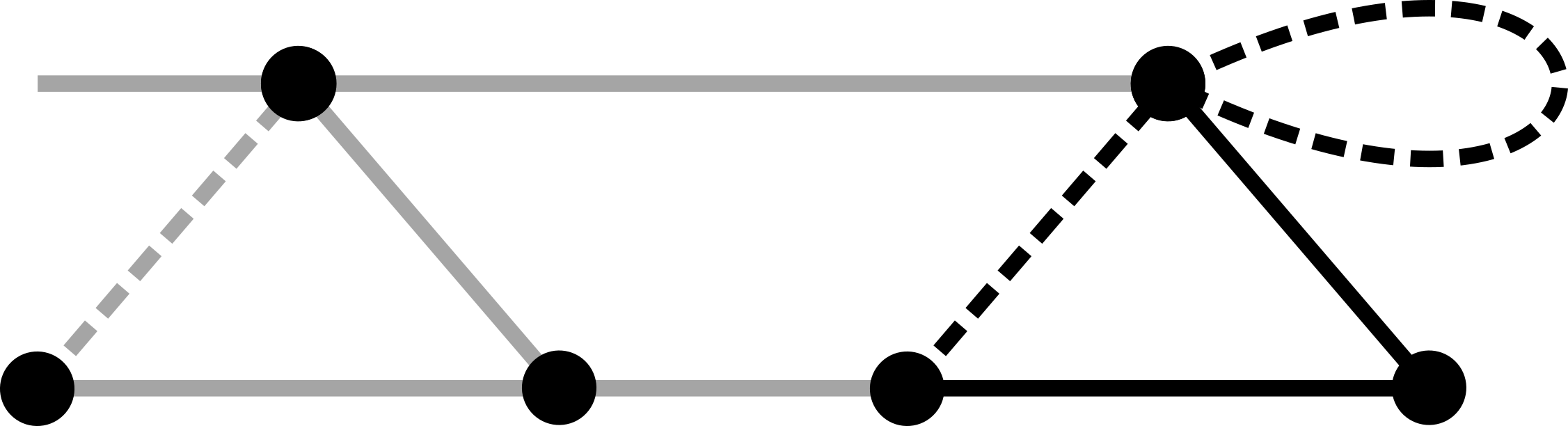}
  \caption{handcuff with degenerate connecting path}\label{subfig:handcuff_degenerate}
 \end{subfigure}
\caption{Various types of subgraphs.}\label{fig:subgraphs}
\end{figure}
% 
% \sophie{I liked the term $1$-tree used in \cite{appakotnyek}, do we want to use it as well?}
We want to emphasize that we carefully distinguish between the notions of circles, circuits, and cycles, which tend to have non-uniform  meanings throughout the literature.
% Recall: we carefully distinguish the three notions, that are not uniformly used 
% \begin{itemize}
%  \item \Def{circles}, i.e., closed simple path (on the underlying graph),
%  \item \Def{circuits}, i.e., minimally dependent sets in the signed graphic matroid that correspondt to  balanced/positive circles, positive loops, loose edges,  and handcuffs, i.e.,  connecting two negative circles, halfedges, negative loops,
%  \item \Def{cycles}, i.e., (bi)oriented circuits without source and sink, positive loops, and loose edges.
% \end{itemize}

% ----------------------------------------
% 

For a signed pseudo-forest $\sgraph$, let $\tc(\sgraph)$, $\lc(\sgraph)$ and $\pc(\sgraph)$ be the number of tree components, loop-tree components, and pseudo-tree components, respectively.
For a subset $F\subseteq E$ of edges we use the shorthand $\tc(F)\coloneqq \tc(\subsgraph{F})$ to count the number of signed tree components in the edge subgraph $\subsgraph{F}$.
We define $\pc(F)$ and $\lc(F)$ analogously.
Note that the edge subgraph $\subsgraph{F}$ still has the same number $\nn$ of nodes as the signed graph $\sgraph$;
in particular $\tc(\emptyset)= \tc(\subsgraph{\emptyset})=\nn$.
For example, the subgraph in \Cref{subfig:pseudo-forest} has one loop-tree component, one pseudo-tree component, and no tree component.

%  , $\pc(F)$ equals the number of components in $F$ containing a single unbalanced circle (not a loop), $\lc(F)$ equals the number of components in $F$ containing a single negative loop, and $\tc(F)$ equals the number of components in $\subsgraph{F}$ that are signed trees without halfedges, loops or circles.
%  for a graph with $\nn$ nodes we have that  $\tc(\emptyset)=\nn$.
% \sophie{Do isolated nodes count into the number of $\tc$, i.e., $\tc(\emptyset)=\nn$ or $\tc(\emptyset)=0$ (later on we do not specify the set of nodes, just the (sub)set of edges)?}
%  ---------------------------------

% \Def{switching}
% \sophie{define switching}
A signed graph $\sgraph = (\ugraph, \sigma)$ is called \Def{balanced} if it does not contain any halfedges and every circle has an even number of negative edges. 
An unsigned graph can be realized by a signed graph all of whose edges are labelled with $+$; it is automatically balanced.
Vice versa, a balanced graph can be converted into a signed graph with only positive edges by switching operations, that is, for a fixed vertex $v$ flipping the sign of $\tau(v,e)$ for all node-edge incidences involving $v$. 
In terms of the incidence matrix, switching means flipping all signs in one row. 
The incidence matrix of a connected signed graph is full rank if and only if the graph is not balanced.
For more on switching equivalences and balance of signed graphs see, e.g.,  \cite{zaslavskysignedgraphs,zaslavsky_matrices_2010}.

% \begin{remark}
% %  \sophie{remark here the identification of subsets of edges with matrices form by corresponding columns!}
% We will often identify subsets $S$ of edges with the subset of signed incidence vectors indexed by $S$
% and the (sub-)matrix $\bS$ of $\bA_\sgraph$ formed by the columns indexed by $S$.
% %   Note that the signed graphic matroid is representable by the  incidence matrix, this implies that independent sets in the signed graphic matroid correspond to linearly independent sets in $\RR^\nn$.
% \end{remark}

To every signed graph $\sgraph$ we can associate a \Def{signed graphic matroid}
$\matroid(\sgraph)=(E,\mathcal{I})$, also called the \Def{bias matroid} of
$\sgraph$, which is the representable matroid with ground set $E$, indexing the columns of the  incidence matrix $A $ and independent sets formed by selections of linearly independent columns in $\bA_\sgraph $.
Switching operations preserve all combinatorial data in the signed graphic matroid.

\begin{prop}[Zaslavsky~\cite{zaslavskysignedgraphs}]\label{thm:signedindependence}
% [Theorem 5.1]
% \sophie{should we also reference the Erratum?} \matt{if it applies to Theorem 5.1, yes}
We recall the signed graphic meaning of the relevant matroid notions:
\begin{enumerate}[{\rm (1)}]
 \item A subset $F\subseteq E$ is an independent set in $\matroid(\sgraph)$ if and only if the subgraph $\subsgraph{F}$ is a signed pseudo-forest. 
%  corresponding arc set in $\sgraph$ forms a signed pseudo-forest.
 \item  A subset $T\subseteq E$ is a basis in $\matroid(\sgraph)$ if and only if the subgraph $\subsgraph{T}$ is an inclusion-maximal signed pseudo-forest in $\sgraph$.  
%  the corresponding arc set in $\sgraph$ forms \sophie{do we need this? I'm struggling to make this precise without too much technical notation. (again the problem with subgraphs and isolated vertices...)}
 \item A subset $C\subseteq E$ is a circuit in $\matroid(\sgraph)$ if and only if the subgraph $\subsgraph{C}$ is a circuit together with isolated vertices (the set of isolated vertices might be empty).
  \item A subset $C\subseteq E$ is dependent in $\matroid(\sgraph)$ if and only if the subgraph $\subsgraph{C}$ contains a circuit.
%  the corresponding arc set in $\sgraph$ contains a circuit.
\end{enumerate}
\end{prop}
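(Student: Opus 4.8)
The plan is to use that $\matroid(\sgraph)$ is \emph{by definition} the linear matroid on the columns of the incidence matrix $\bA_\sgraph$, so that ``independent'' means ``linearly independent set of columns.'' The whole statement then rests on the rank formula recalled just above: the incidence matrix of a \emph{connected} signed graph on $k$ nodes has rank $k-1$ if the graph is balanced and rank $k$ if it is unbalanced. I would prove (1) carefully and then deduce (2)--(4) by formal matroid theory.

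For (1), the first step is to reduce to a single connected component. If $\subsgraph{F}$ has connected components $H_1,\dots,H_r$ on pairwise disjoint node sets, then the columns of $\bA_{H_i}$ are supported on disjoint sets of rows, so the columns indexed by $F$ are linearly independent if and only if, within each $H_i$, the corresponding columns are linearly independent. Thus it suffices to characterize when the incidence matrix of a connected signed graph has independent columns, i.e., when its rank equals its number of edges. The second step is the per-component dichotomy supplied by the rank formula. A connected signed graph on $k$ nodes has at least $k-1$ edges. If it is balanced, the rank is $k-1$, so the columns are independent precisely when the graph is a tree, i.e., a signed tree. If it is unbalanced, the rank is $k$, which forces it to have at least $k$ edges (a connected graph with an unbalancing feature -- a halfedge, a negative loop, or a circle -- already exceeds a spanning tree), so the columns are independent precisely when it has exactly $k$ edges; such a graph carries exactly one unbalancing feature and, since positive loops are excluded from $\bA_\sgraph$, is therefore a halfedge-tree, a loop-tree, or a pseudo-tree (the circle must be unbalanced, i.e., have an odd number of negative edges, else the graph would be balanced). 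Reassembling the components, $\subsgraph{F}$ has rank equal to $|F|$ if and only if every component is a signed tree, halfedge-tree, loop-tree, or pseudo-tree, which is exactly the definition of a signed pseudo-forest.

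Statements (2)--(4) then follow formally. A basis is a maximal independent set, so (2) is immediate from (1). For (4), a set is dependent if and only if it is not independent, hence by (1) if and only if $\subsgraph{C}$ fails to be a pseudo-forest, which by the component analysis means some component violates the edge count and therefore contains a graph-theoretic circuit. For (3), a matroid circuit is an inclusion-minimal dependent set; combining (4) with minimality, the inclusion-minimal non-pseudo-forests supported on their own edges are precisely a single balanced circle (even number of negative edges) or a handcuff, possibly together with isolated vertices, matching the listed description.

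The main obstacle I expect is the unbalanced edge-count, specifically verifying cleanly that a pseudo-tree carries no linear dependence among its columns while a balanced circle or handcuff is \emph{minimally} dependent. The cleanest route is to lean entirely on the cited balance--rank theorem rather than recomputing signed cycle spaces by hand: once one knows that every connected unbalanced graph has full rank $k$, independence of a $k$-node, $k$-edge unbalanced component and dependence of a balanced circle or handcuff both drop out of counting edges against rank. Exhibiting the handcuff relation explicitly -- a single linear dependence that travels along the connecting path and combines the two unbalancing features at its ends -- is the one place where a short direct computation, or an appeal to Zaslavsky's original treatment, is needed.
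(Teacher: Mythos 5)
The paper itself gives no proof of this proposition: it is quoted verbatim from Zaslavsky \cite{zaslavskysignedgraphs}, so there is no in-house argument to compare yours against, and your attempt has to be judged on its own. On that footing, your route is the standard one and is sound where it is carried out. The reduction to connected components via the block structure of $\bA_\sgraph$ is correct, and the per-component dichotomy (rank $k-1$ for a balanced connected component, since switching is a rank-preserving row scaling reducing it to an all-positive graph; rank $k$ for an unbalanced one, which is the balance--rank fact the paper recalls in Section~4.1) does settle part (1) completely, with part (2) immediate as maximal independent sets. Your edge-counting is also right: a connected component on $k$ nodes with independent columns has exactly $k-1$ edges in the balanced case (a signed tree) and exactly $k$ in the unbalanced case, where the unique extra feature must be a halfedge, a negative loop, or an odd circle.

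The genuine gap --- which you correctly flag but do not close --- sits in part (3), and it propagates into your sketch of (4), since the clean derivation of (4) is ``dependent iff contains a matroid circuit'' plus (3), not the looser ``violates the edge count, therefore contains a graph circuit.'' Two concrete items are missing. First, the classification: you must show every inclusion-minimal dependent set is a balanced circle or a handcuff, and the case that needs an actual argument is two unbalanced circles \emph{sharing edges}, i.e.\ a theta subgraph with paths $P_1,P_2,P_3$ between two branch nodes. There the product of the signs of the three circles is $\bigl(\sigma(P_1)\sigma(P_2)\bigr)\bigl(\sigma(P_2)\sigma(P_3)\bigr)\bigl(\sigma(P_1)\sigma(P_3)\bigr)=+$, so not all three circles can be unbalanced; a balanced circle is present, and hence a minimal dependent set whose unbalancing features are circles must have them edge-disjoint, yielding a handcuff. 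Second, the dependence and minimality checks: for a handcuff the explicit kernel vector has entries $\pm 1$ on the edges of the two unbalanced pieces and $\pm 2$ on the connecting path (the half-integrality phenomenon the paper later cites for binet matrices), while minimality follows from your own rank bookkeeping --- deleting any edge of a balanced circle leaves a tree, and deleting any edge of a handcuff leaves either a connected unbalanced graph with $k$ nodes and $k$ edges or two unbalanced components each with as many edges as nodes, all pseudo-forests. With those two additions your proof is complete; without them, the statement you assert in (3) is exactly the nontrivial content of Zaslavsky's theorem rather than a consequence of your rank computation.
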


% \begin{remark}
 Note that inclusion-maximal edge sets that form a pseudo-forest as subgraphs are not necessarily connected, hence not necessarily a (signed) tree, loop-tree, halfedge-tree, or pseudo-tree; see, e.g., \Cref{subfig:pseudo-forest}.
 We will call an edge set $S\subseteq E$ \Def{spanning} if it contains an inclusion-maximal pseudo-forest. Again this does not imply that the subgraph  is connected.
%  not necessarily connected for signed graphs...
% \end{remark}

% By representability those are precisely the sets of linearly independent vectors in the matrix ...

% \begin{theorem}[{\cite[Theorem 5.1.]{zaslavskysignedgraphs}}]\label{thm:signedindependence}
%  independent sets in the signed graphic matroids are precisely ... . 
% \end{theorem}

A \Def{coloop} is an edge in a signed graph that corresponds to a coloop in the signed graphic matroid, i.e., an element that is contained in every basis. 
% A coloop is not necessarily a bridge in the (underlying) graph, i.e., an edge that increases the number of connected components after removing, as it is in the case of unsigned graphs.
% Conversely, not every bridge in (the underlying graph of) a signed graph is necessarily an isthmus. 
% See \cite[Lemma 2.4 and Lemma 2.5]{bouchet_nowhere-zero_1983}.
A coloop is an edge whose deletion makes an
unbalanced component balanced, or a bridge connecting two components of which
at least one is balanced \cite[Theorem 5.1]{zaslavskysignedgraphs}.

\subsection{The Ehrhart polynomial of the acyclotope of a signed graph}

 Our definition of $\Zono(\bA_\sgraph)$ implicitly depends on a choice of
orientation for each edge of $\sgraph$ (except loops and halfedges). 
 However, any property of the acyclotope discussed here is independent of these choices.
 For example, the face structure of the zonotope is determined by the combinatorial structure of the corresponding hyperplane arrangement, i.e., the poset of intersections. This in turn does not depend on the choice of orientation of the hyperplane normals.
%  \sophie{Recall face structure of the acyclotope and equivalently its hyperplane arrangement? \cite{zaslavskyorientationsignedgraphs}}

The following result generalizes the Ehrhart polynomial of the type-B root
polynomial~\cite{ardila_arithmetic_2020}.
% -------------------- suggestion -----------------------------------
Recall that $\tc(\sgraph)$, $\lc(\sgraph)$ and $\pc(\sgraph)$ denote the number of tree components, loop-tree
components, and pseudo-tree components of $\sgraph$, respectively.
% ------------------------------------------------------------------
\begin{theorem}\label{thm:signedehrhart}
 The Ehrhart polynomial of the acyclotope $\Zono(\bA_\sgraph)$ equals
 \begin{equation}
  \ehr_{\Zono(\bA_\sgraph)}(t)
%   =\ehr_{\acyc(\sgraph)}(t)
  = \sum_{F} 2^{\pc(F)+\lc(F)} \, t^{\nn-\tc(F)}\,,
 \end{equation}
%  \sophie{check $|V(F)|$ vs $\nn$ aka does $F$ contain all the nodes of $\ugraph$ and what precisely does $\tc(F)$ count?}
 where the sum is over all $F \subseteq E$ such that $\subsgraph{F}$ is a pseudo-forest.
%  , $\pc(F)$ equals the number of components in $F$ containing a single unbalanced circle (not a loop), $\lc(F)$ equals the number of components in $F$ containing a single negative loop, and $\tc(F)$ equals the number of components in $\subsgraph{F}$ that are signed trees without halfedges, loops or circles.
\end{theorem}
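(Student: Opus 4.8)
The plan is to apply Stanley's formula (Theorem~\ref{thm:stanley}) to the generator matrix $\bA_\sgraph$ and translate each of its ingredients into signed-graph language. Stanley's sum ranges over the linearly independent subsets of the columns of $\bA_\sgraph$, and by Proposition~\ref{thm:signedindependence}(1) these are exactly the edge sets $F \subseteq E$ for which $\subsgraph{F}$ is a pseudo-forest; this already matches the index set of the asserted sum. Writing $\bF$ for the submatrix of columns indexed by such an $F$, it then remains to identify, for each $F$, the exponent $|F|$ with $\nn - \tc(F)$ and the coefficient $\g(\bF)$ with $2^{\pc(F)+\lc(F)}$.

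For the exponent I would decompose $\subsgraph{F}$ into connected components and count edges. A tree component on $k$ nodes has $k-1$ edges, while each halfedge-tree, loop-tree, or pseudo-tree component on $k$ nodes has exactly $k$ edges, being a spanning tree plus one additional feature (a halfedge, a negative loop, or a chord closing an unbalanced circle). Since $\subsgraph{F}$ retains all $\nn$ nodes of $\sgraph$ and isolated vertices are single-node tree components, summing edge counts over components telescopes to $|F| = \nn - \tc(F)$, the exponent of $t$ in the claim.

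For the coefficient I would use the standard identification of $\g(\bF)$, the gcd of the maximal minors of $\bF$, with the lattice index $[\,(\rspan F \cap \ZZ^\nn) : \zspan F\,]$; via the Smith normal form the gcd of the $|F|\times|F|$ minors equals the product of the invariant factors, which is exactly this index. Because the node sets of distinct components are disjoint, the index is multiplicative over the connected components of $\subsgraph{F}$, so I would compute it one component at a time. A balanced tree contributes index $1$, as the type-A columns $\be_i-\be_j$ already $\ZZ$-span all integer points of the hyperplane $\{\sum x_i = 0\}$ on its node set; a halfedge-tree likewise contributes $1$, since the halfedge column $\be_j$ together with the tree differences generates the full lattice $\ZZ^S$ on its node set $S$. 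A loop-tree contributes index $2$, because the negative-loop column $\pm 2\be_j$ enlarges the difference lattice to $\{x : \sum x_i \in 2\ZZ\}$, and a pseudo-tree contributes index $2$, because the incidence matrix of an unbalanced circle has determinant $\pm 2$. Multiplying over components gives $\g(\bF) = 2^{\pc(F)+\lc(F)}$, and inserting both computations into Theorem~\ref{thm:stanley} yields the stated polynomial.

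I expect the coefficient computation to be the main obstacle. The delicate point is to confirm that $\g(\bF)$ captures the full lattice index rather than a single determinant, and that each loop-tree and pseudo-tree contributes precisely one factor of $2$ with no extra $2$-divisibility created by interactions between components; the multiplicativity over disjoint node sets, together with the invariant-factor interpretation, is what makes this manageable. Equally worth pinning down is the contrast that halfedge-trees, although they are non-tree components, still contribute index $1$ and so do not appear in the exponent $\pc(F)+\lc(F)$.
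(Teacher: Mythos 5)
Your proposal is correct and follows the paper's proof route exactly: apply Stanley's \Cref{thm:stanley}, identify the linearly independent column subsets with pseudo-forests, and establish $\lvert F\rvert = \nn - \tc(F)$ and $\g(\bF) = 2^{\pc(F)+\lc(F)}$. The only difference is one of self-containment: the paper delegates these two identities to \Cref{lem:relvolpi} (cited from the literature) together with the lattice-index interpretation of $\g(\bF)$ in \Cref{remark:mofs2}, whereas you supply a direct proof of that lemma --- and your component-wise computation (multiplicativity of the index over the block-diagonal structure coming from disjoint node sets; index $1$ for trees and halfedge-trees; index $2$ for loop-trees and unbalanced pseudo-trees) is a correct and complete argument for it.
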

We recall that the number of nodes in every edge subgraph $\subsgraph{F}$ is the same as
the number $n$ of nodes in $\sgraph$. 
Similarly, the number $\tc(F)$ of signed tree components
% the number of components in $\subsgraph{F}$ that are signed trees without halfedges, loops or circles
 counts all the isolated vertices; e.g., for a graph with $\nn$ nodes, $\tc(\emptyset)=\nn$.

The following lemma has appeared in various guises; see \cite[Lemmas~4.9 \& 4.10]{ardilacastillohenley}, \cite[Proposition~4.2]{kotnyek_generalization_2002}, and \cite[Lemma~8A.2]{zaslavskysignedgraphs}.

\begin{lem} % [Ardila--Castillo--Henley--Zaslavsky]
\label{lem:relvolpi}
 Let $\bF$ be a linearly independent subset of the columns of $\bA_\sgraph$.
 The corresponding subset $F$ of edges of $\sgraph$ forms a pseudo-forest as subgraph.
%  \sophie{formulate in terms of subgraph}
 Then 
 \begin{align}
%   \relvol_k(\Pi(S))= 2^{\pc(S)+\lc(S)} \quad\text{and}\quad k=\dim(\Pi(S))=\lvert S\rvert=\nn-\tc(S)\,.
\g(\bF)= 2^{\pc(F)+\lc(F)} \quad\text{and}\quad \lvert F\rvert=\nn-\tc(F)\,,
 \end{align}
 where $\g(\bF)$ is as in \Cref{thm:stanley}.
%  , i.e., ...\sophie{rephrase in terms of determinants and cosets}
\end{lem}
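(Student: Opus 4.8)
The plan is to prove the two identities separately, handling the combinatorial one first and then the arithmetic (gcd) one. For the dimension count $|F| = \nn - \tc(F)$, I would argue component-by-component on the pseudo-forest $\subsgraph{F}$. Since the incidence matrix $\bF$ is block-decomposable along connected components, the number of columns (edges) in each component can be counted against its number of nodes. A signed tree component on $c$ nodes contributes $c-1$ edges, while a component containing a single ``defect'' (a halfedge, a negative loop, or an odd-negative circle, i.e.\ a pseudo-tree, loop-tree, or halfedge-tree) on $c$ nodes contributes exactly $c$ edges, since adding one such defect edge to a spanning tree of the component makes the incidence columns full-rank on those $c$ coordinates. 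Summing over all components and writing $\nn$ as the total number of nodes, the total edge count is $\nn$ minus the number of tree components, which is exactly $\nn - \tc(F)$. This step is essentially bookkeeping once the per-component ranks are established, and the latter is where linear independence (equivalently, \Cref{thm:signedindependence}(1), which characterizes independent sets as pseudo-forests) is invoked.

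For the gcd identity $\g(\bF) = 2^{\pc(F)+\lc(F)}$, recall that $\g(\bF)$ is the gcd of all maximal minors of the $\nn \times |F|$ matrix $\bF$. Again I would use the block structure: the determinant of a square submatrix factors across connected components, so it suffices to compute the relevant gcd for each component type and multiply. For a signed tree component, the incidence submatrix (after deleting a redundant row) is totally unimodular in the classical graphic sense, so its contribution to $\g$ is $1$. For a halfedge-tree component, the unique halfedge column carries an entry $\pm 1$, and the component's full-rank minor can be normalized to an absolute value of $1$, again contributing a factor $1$. The essential computation is that each negative-loop component and each odd-negative-circle (pseudo-tree) component contributes a factor of exactly $2$: a negative loop column is $\pm 2 \be_j$, giving the factor $2$ directly, while for a pseudo-tree the determinant of its incidence matrix over the component is $\pm 2$ (this is the standard evaluation that an unbalanced circle has a half-integral, not integral, kernel, reflected in the minor). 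Multiplying over all components gives $2^{\pc(F)+\lc(F)}$, since exactly the loop-tree and pseudo-tree components contribute the factor $2$ and all tree/halfedge-tree components contribute $1$.

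The main obstacle is the per-component minor evaluation for the pseudo-tree case, i.e.\ showing that the incidence matrix of a connected unbalanced signed graph containing exactly one odd-negative circle has maximal minors with gcd exactly $2$. The cleanest route is to reduce to the circle itself: collapse the tree parts (which contribute unimodular factors) and compute the determinant of the $k \times k$ incidence matrix of a single negative circle of length $k$, which expands (by cofactors around the cycle) to $\pm(1 - (-1)^{\#\text{neg edges}})$ up to sign conventions, equalling $\pm 2$ precisely when the number of negative edges is odd. I would make this rigorous by choosing a spanning tree of the component, ordering columns so that the tree columns form a triangular unimodular block, and then expanding the remaining defect column; the $2$ emerges from the two ``incompatible'' sign constraints around the unbalanced circle. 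Because this is the one place where the signed (as opposed to ordinary graphic) structure genuinely enters, I would also cross-check it against the cited references (\cite[Lemma~8A.2]{zaslavskysignedgraphs}, \cite[Proposition~4.2]{kotnyek_generalization_2002}), which record exactly this half-integrality phenomenon, so the proof can legitimately lean on them rather than reprove the determinant from scratch.
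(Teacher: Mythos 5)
Your proof is correct, but note that the paper itself offers no proof of this lemma at all: it states it and cites \cite[Lemmas~4.9 \& 4.10]{ardilacastillohenley}, \cite[Proposition~4.2]{kotnyek_generalization_2002}, and \cite[Lemma~8A.2]{zaslavskysignedgraphs}. What you have written is essentially a correct reconstruction of the standard argument from that literature: block-decompose $\bF$ along connected components of $\subsgraph{F}$, count $c-1$ edges per tree component and $c$ edges per defect component to get $|F| = \nn - \tc(F)$, and evaluate per-component determinants ($\pm 1$ for signed trees after switching, $\pm 1$ for halfedge-trees, $\pm 2$ for loop-trees via the $\pm 2\be_j$ column, $\pm 2$ for pseudo-trees via the odd-negative-circle determinant $\pm(1-(-1)^{\#\mathrm{neg}})$). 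All of these evaluations are right.

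One step deserves to be made explicit rather than left as ``the determinant factors across components'': the gcd of all maximal minors is not in general the product of per-block gcds, so you should say why it is here. The clean observation is that a maximal minor of $\bF$ is nonzero only if the row selection takes exactly $m_i$ rows from the $i$-th block (otherwise some block's columns are dependent within the submatrix), in which case it factors as a product of block minors; the non-tree blocks are square, so each contributes its fixed determinant ($\pm 1$ or $\pm 2$), and for a tree block deleting \emph{any} single row yields determinant $\pm 1$ (never $0$). Hence every nonzero maximal minor of $\bF$ has the same absolute value $2^{\pc(F)+\lc(F)}$, and at least one is nonzero by full column rank, which gives $\g(\bF) = 2^{\pc(F)+\lc(F)}$ exactly. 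With that sentence added, your argument is complete and self-contained, whereas the paper delegates the whole statement to the cited references.
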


\begin{proof}[Proof of \Cref{thm:signedehrhart}]
We apply Stanley's \Cref{thm:stanley}.
Linearly independent subsets of $\bA_\sgraph$ correspond, by \Cref{lem:relvolpi}, to
pseudo-forests of $\sgraph$. \Cref{lem:relvolpi} also gives the dimension ($n-\tc(F)$) and
volume ($2^{\pc(F)+\lc(F)}$) of the parallelepiped associated with a given linearly
independent set. As we will discuss in \Cref{remark:mofs2}, the latter volume equals~$\g(\bF)$.
\end{proof}

\section{Tocyclotopes for oriented graphs and the Flow Space}\label{sec:graphtocyclotopes}

In this section we investigate the duality translations for type A root systems, i.e., for simple oriented graphs. In \Cref{sec:signedtoyclotopes}, we extend the results of this section to root systems of type B/C/D.

We start with an (unsigned, simple) graph $\graph=(V,E)$ with incidence matrix $\bA_\graph
\in \RR^{ n \times m } $; it comes
with a natural block form given by the connected components of $\graph$, and so we 
will assume that $\graph$ is connected.
The matroid $M$ defined by $\bA_\graph$ can be given in terms of $\graph$ (i.e., $M$ is a
\Def{graphic matroid}): such a reduction corresponds to choosing a basis of the corresponding matroid, i.e., a spanning tree, whose edges then correspond each to the the columns of the identity matrix. 

We first modify $\bA_\graph$ (which has rank $n-1$) to a full-rank matrix that still represents $M$, using a
well-known construction. Namely, one uses elementary row operations to create a row of zeros and then
discards the latter. The result is a matrix of the form $[\bW \, | \, \bI] \in \RR^{ (n-1)
\times m }$ known as a \Def{network matrix} of $\graph$. It can be constructed, e.g., via a
spanning tree of $\graph$, whose edges correspond to the identity matrix.

Matroid duality yields a representation matrix $\bA_\graph^{\triangle} = [\bI \, | \, -\bW^{\top}]
\in \RR^{ (m-n+1) \times m }$ for $M^\triangle$, which is called the \Def{cographic matroid}. It
can be described purely in terms of $\graph$: its ground set is again $E$ and the 
independent sets are precisely the complements of spanning sets of $\graph$.
A \Def{spanning set} of a connected graph is a subset of edges whose  subgraph
contains a spanning tree.

The \Def{tocyclotope} of $\graph$ is the zonotope $\Zono(\bA_G^\triangle)$.
We note that there were choices involved when constructing $\bA_G^\triangle$; however,
different choices correspond to resulting matrices that are unimodularly equivalent (since
elementary row operations are unimodular), Thus, $\Zono(\bA_G^\triangle)$ is unique up to unimodular equivalence.
We make use of Stanley's \Cref{thm:stanley} about Ehrhart polynomials of lattice zonotopes.
% We make use of the following result for Ehrhart polynomials of lattice zonotopes~\cite{MR1116376}.
% 
% \begin{theorem}[Stanley]\label{thm:stanley}
% \sophie{we need this theorem further up, since we changed the order of sections...}
% Let $\mathcal{Z}$ be a zonotope generated by the integer vectors $\ba_1, \dots, \ba_m \in
% \ZZ^n$. Then the Ehrhart polynomial of $\mathcal{Z}$ equals 
% \begin{align}
%     \ehr_{\mathcal{Z}}(t) = \sum_\bF \g(\bF) \, t^{|\bF|}
% \end{align}
% where $\bF$ ranges over all linearly independent subsets of $\{ \ba_1, \dots, \ba_m \}$ and $\g(\bF)$ is the greatest common divisor of all minors of size $|\bF|$ of the matrix whose columns are the elements of~$\bF$.
% \end{theorem}
% \sophie{rewrite....}
% In Remark~\ref{remark:mofs2} below we will discuss the quantities $\g(\bF)$ in more detail. 
This yields the companion result to Stanley's~\cite{MR1116376} Ehrhart polynomial structure of $\Zono(\bA_\graph)$ mentioned in the Introduction.

\begin{theorem}\label{thm:cographicehrhart}
Let $\graph=(V,E)$ be a simple and connected
% , and bridgeless\footnote{
% A bridge is also called an \textit{isthmus}. In fact, Tom would not easily accept any other term.
% } 
graph. Then the Ehrhart polynomial of the tocyclotope $\Zono(\bA_G^\triangle)$ is 
\begin{align}
    \ehr_{\Zono(\bA_G^\triangle)}(t)
    = \sum_{k=0}^{m-n+1} d_k \, t^{k}
\end{align}
where the coefficient $d_k$ equals the number of complements of spanning sets $S^C$ in~$\graph$ of size~$k=\lvert S^C\rvert$.
% \sophie{statement might be confusing: it is not clear whether the ``... of size $k$'' refers to spanning sets or complements of spanning sets, what we want is $\lvert S\rvert =m-k$ or equivalently $\lvert S^C\rvert =k$, where $S$ is a spanning set}
% \matt{ich hab mal die (...) raus genommen. Ist das jetzt klar?}
% ------------suggestion-------------------
This also equals the number of forests of size $k$ in $G^{\Delta}$ if the graph $G$ was planar.
\end{theorem}

\begin{proof} % [Proof of Theorem~\ref{thm:cographicehrhart}]
Both $\bA_\graph$ and $\bA_\graph^{\triangle}$ are totally unimodular. 
Further, the column vectors of $\bA_\graph^{\triangle}$ are, by definition, linearly independent if
and only if they induce complements of spanning sets on $\graph$, as these induce the independent sets of the cographic matroid.
Now apply \Cref{thm:stanley}.
% Counting the number of spanning sets or the number of their complements is the same.
% \sophie{potentially adapt proof to changed statement?}
% \matt{ich w\"urde den letzten Satz einfach streichen... :--)}
\end{proof}

\begin{example}
The tocyclotope of the complete graph $K_4$ is the 3-permutahedron. We can thus calculate
its Ehrhart polynomial as follows. The linear coefficient $d_1$ equals $6$ since every edge of
$K_4$ is a complement of a spanning set. Every choice of two edges of $K_4$ is a complement
of a spanning set and so the second coefficient $d_2$ equals $15$. Every choice of three edges besides the ones incident
to a single node is a complement of a spanning set and thus $d_3 = 16$.
In total we obtain
\begin{align}
    \ehr_{\Zono(\bA_{K_4}^\triangle)}(t) = 16 \, t^3 + 15 \, t^2 + 6 \, t + 1 \, .
\end{align}
\end{example}

We briefly comment on how $\Zono(\bA_\graph^\triangle)$ connects to flows on $\graph$ and its cographic arrangement;
here we assume that $\graph$ does not contain isthmi (in the graph case isthmi are bridges).
The \Def{flow space}
(also called the \emph{cycle space})
of~$\graph$ is defined as the kernel of $\bA_\graph$, which is an $(m-n+1)$-dimensional subspace of
$\RR^m$.
The \Def{cographic arrangement} $\cH(\bA_\graph^\triangle)$ is the arrangement induced by the coordinate
hyperplanes in $\RR^m$ on $\ker(\bA_\graph)$.
Greene and Zaslavsky~\cite{ZasGre} showed that the regions of
the cographic arrangement are in one-to-one correspondence with the totally cyclic orientations of~$\graph$. 
% \sophie{do we want to cite \cite{mcmullernzonotopes} again, since it contains an explicit construction of the ``dual zonotope'' called ``central representation'' or ``c.s. transform''?}
% \matt{I feel the above reference to \cite{mcmullernzonotopes} suffices...}

\begin{lem}\label{lem:flow_space}
Let $\graph=(V,E)$ be a simple, connected, and bridgeless graph.
The linear surjection $\bA_\graph^{\triangle}: \RR^m \to \RR^{ m-n+1 }$ maps the flow space
$\ker(\bA_\graph)$ bijectively to $\RR^{ m-n+1 }$. Thus the columns of the matrix $\bA_\graph^{\triangle}$ are normal vectors for an isomorphic copy of the cographic arrangement living
in~$\RR^{ m-n+1 }$.
\end{lem}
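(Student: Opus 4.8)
The plan is to prove the bijection by pure linear algebra and then read off the arrangement interpretation; the connectivity hypothesis controls dimensions, while the bridgeless hypothesis guarantees that the resulting normal vectors are nonzero.

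First I would fix dimensions. Since $\graph$ is connected, $\rank(\bA_\graph)=n-1$, so the flow space $\ker(\bA_\graph)$ has dimension $m-n+1$, which is exactly the dimension of the target $\RR^{m-n+1}$. Moreover $\bA_\graph^{\triangle}=[\bI\mid -\bW^{T}]$ contains an identity block, hence has full row rank $m-n+1$ and is surjective onto $\RR^{m-n+1}$. Because the domain (restricted to the flow space) and the codomain have equal dimension, it suffices to show that $\bA_\graph^{\triangle}$ is injective on $\ker(\bA_\graph)$.

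The key step is the block identity $[\bI\mid -\bW^{T}]\,[\bW\mid\bI]^{T}=\bW^{T}-\bW^{T}=0$ (with the columns of both matrices indexed by $E$ in the same order). Since $[\bW\mid\bI]$ is obtained from $\bA_\graph$ by row operations and the deletion of a zero row, it has the same row space, hence the same kernel, as $\bA_\graph$. The identity therefore says that every row of $\bA_\graph^{\triangle}$ is orthogonal to the row space of $\bA_\graph$, i.e. lies in $\ker(\bA_\graph)$. Comparing dimensions ($m-n+1$ on both sides) gives $\operatorname{rowspace}(\bA_\graph^{\triangle})=\ker(\bA_\graph)$, so the flow space is precisely the row space of $\bA_\graph^{\triangle}$. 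A linear map restricted to its own row space $(=(\ker)^{\perp})$ is injective, since any vector in the kernel of the restriction lies in $\ker(\bA_\graph^{\triangle})\cap\operatorname{rowspace}(\bA_\graph^{\triangle})=\{\bzero\}$. Combined with the dimension count, the restriction $\bA_\graph^{\triangle}\colon\ker(\bA_\graph)\to\RR^{m-n+1}$ is a bijection.

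For the arrangement statement I would parametrize the flow space by the transpose map rather than push hyperplanes forward naively. Writing $\bA_\graph^{\triangle}=[\bc_1,\dots,\bc_m]$ and setting $\bx=(\bA_\graph^{\triangle})^{T}\bz$ for $\bz\in\RR^{m-n+1}$ gives an isomorphism onto $\ker(\bA_\graph)=\operatorname{rowspace}(\bA_\graph^{\triangle})$, and the $i$-th coordinate becomes $\bx_i=\langle\bc_i,\bz\rangle$. Hence the coordinate hyperplane $\{\bx_i=0\}$ cut into the flow space corresponds exactly to the hyperplane with normal $\bc_i$ in $\RR^{m-n+1}$, identifying the cographic arrangement with $\cH(\bA_\graph^{\triangle})$ up to a linear change of coordinates. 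Here the bridgeless hypothesis enters: edge $i$ lies in some cycle, so the flow space is not contained in $\{\bx_i=0\}$ and $\bc_i\neq\bzero$; thus each of the $m$ normals is genuine and we obtain an honest copy of the cographic arrangement. I expect the only real subtlety to be this normal-vector identification, since pushing a hyperplane forward by the bijection $\bA_\graph^{\triangle}|_{\ker(\bA_\graph)}$ would transform normals by the inverse transpose, producing $(\bA_\graph^{\triangle}(\bA_\graph^{\triangle})^{T})^{-1}\bc_i$ rather than $\bc_i$; using the parametrization by $(\bA_\graph^{\triangle})^{T}$ avoids this distortion and exhibits the columns directly. The bijection itself, by contrast, is routine rank-and-dimension bookkeeping once the orthogonality identity is in hand.
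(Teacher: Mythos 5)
Your proof is correct and takes essentially the same route as the paper: the key orthogonality identity $[\bI\mid -\bW^{T}]\,[\bW\mid\bI]^{T}=\bzero$ (the paper's ``each row of $\bA_\graph$ is perpendicular to each row of $\bA_\graph^\triangle$'') combined with a rank/dimension count, merely phrased via $\operatorname{rowspace}(\bA_\graph^\triangle)=\ker(\bA_\graph)$ instead of the paper's equivalent $\ker(\bA_\graph)\cap\ker(\bA_\graph^\triangle)=\{\bzero\}$. Your additional care with the normal-vector identification via the parametrization $(\bA_\graph^\triangle)^{T}$, and your observation that bridgelessness guarantees nonzero columns, correctly fill in the ``Thus'' clause that the paper leaves implicit.
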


\begin{proof}
We need to prove $\ker(\bA_\graph) \cap \ker(\bA_\graph^\triangle) = \{ \bzero \}$. 
By construction, each row of $\bA_\graph$ is
perpendicular to each row of $\bA_\graph^\triangle$. Thus, any $\bw \in \ker(\bA_\graph) \cap
\ker(\bA_\graph^\triangle)$ is perpendicular to all row vectors in both $\bA_\graph$ and in 
$\bA_\graph^\triangle$, which implies $\bw = \bzero$.
\end{proof}

Thus, we may think of $\bA_\graph^\triangle$ as simultaneously generating the cographic
arrangement $\cH(\bA_\graph^\triangle)$ and the tocyclotope $\Zono(\bA_G^\triangle)$, giving
rise to geometric (e.g., the vertices of $\Zono(\bA_G^\triangle)$ are given by totally
cyclic orientations of~$\graph$) and arithmetic (e.g., \Cref{thm:cographicehrhart}) structures.

% --------------------

\section{Steinitz Graphs and Triple Duality}\label{sec:Steinitz}

We now take a (scenic) detour by considering the special case that $\graph$ is \Def{Steinitz}, i.e., $3$-connected, simple and
planar. By Steinitz's Theorem (see, for example, \cite[Chapter~4]{alma990041550820402883}),
$\graph$ is the graph of a $3$-polytope~$P$. In this case, $\graph$ has a dual graph
$\graph^\triangle$, which is the graph of the dual polytope $P^\triangle$ (assuming $\bzero$
is in the interior of $P$). This section gives a geometric interpretation of our results in later sections given the above assumptions. 

Let $C_j$ be the free $\ZZ$-module generated by the $j$-faces of $P$, with analogous modules $C_j^\triangle$ generated by the $j$-faces of $P^\triangle$.
The usual cellular chain complex given by the face structures of $P$ and $P^\triangle$ is
defined via the natural chain maps $f_j\colon C_j \to C_{ d-1-j }^\triangle$ that assign each
$j$-face of $P$ its dual face of $P^\triangle$
and the boundary maps $\partial_j : C_j \to C_{ j-1 }$ (represented as matrices over $\ZZ$),
which record the incidences among the $j$-faces and $(j-1)$-faces,
given their orientations; see, e.g., \cite[Chapter 2]{cellcomBE} for
background. 
%This means the matrix entry corresponding to a $j$-face $A$ and a $(j-1)$-face $B$ is $0$ if $B$ does not lie in $A$ and $\pm 1$ otherwise. 
Once the signs are chosen for $\partial_1 = \bA_G$, the remaining boundary maps are determined by $\partial_{ j-1 } \, \partial_j = 0$.
% The starting point is the \Def{oriented node-edge-incidence-matrix} $\partial_1$, which has a row for each node and a column for each edge; we think of this matrix as recording an orientation of the edges of $P$ with respect to its vertices. If an edge $e$ is oriented from $v$ to $w$, the entry determined by $v$ and $e$ is $-1$, the entry determined by $w$ and $e$ is $+1$, and all other entries in the column $e$ are $0$.  We note that the columns of this matrix are precisely the normal vectors for the graphic arrangement $\cH(G)$.
% A generic choice for orienting the edges with respect to vertices, which we call the \Def{base orientation} is to orient an edge from the smaller to the larger index of the two endpoints, given a fixed indexing scheme of the vertices by integers (typically $1, 2, \dots, |V|$).
% The \Def{oriented ridge-facet-incidence-matrix} $\partial_{ d-1 }$ has a row for each ridge and a column for each facet; we think of this matrix as recording an orientation of the ridges of $P$ with respect to its facets. If a ridge  $r = f_j \cap f_k$ is oriented from $f_j$ to $f_k$, the entry determined by $f_j$ and $r$ is $-1$, the entry determined by $f_k$ and $r$ is $+1$ and all other entries in the column $f$ are~$0$.
% Again there is a \Def{base orientation} for the ridges with respect to facets, where we orient a ridge from the smaller to the larger index of the two facets defining it, given a fixed indexing scheme of the facets.
%The orientation of the edges of $\graph^\triangle$ is the orientation induced by graph duality. 
For 3-dimensional polytopes, the cellular chain complex associated to the
CW-complex given by the face-structure of $P$ and $P^{\triangle}$ is:

\begin{center}
\begin{tikzcd}[row sep=large, column sep=large]
&C_{2} \arrow{r}{\partial_2}\arrow{d}{f_{2}}
&C_{1} \arrow{r}{\partial_1}\arrow{d}{f_{1}}
&C_{0} \arrow{d}{f_{0}} \\
&C_{0}^{\triangle}   
&C_{1}^{\triangle} \arrow{l}{\partial_2^{\top} = \partial_1^{\triangle}}
&C_{2}^{\triangle} \arrow{l}{\partial_1^{\top} = \partial_2^{\triangle}}
\end{tikzcd}
\end{center}

% Because $P$ is a $3$-polytope, the edges of $P^{\triangle}$ are dual to the edges of $P$. 
We again can view the cographic arrangement via this diagram.
Note that $P$ has $m-n+2$ facets.

% F - E + V = 2 -> F = E - V + 2

\begin{lem}\label{lem:compl}
A subset of the columns of $\partial_1^{\triangle} = \partial_2^{\top}$ is linearly independent
if and only if it induces a complement of a spanning set on $\graph$. Thus the rank of
$\partial_2^{\top} = \partial_1^{\triangle}$ is $m-n+1$.
\end{lem}

\begin{proof}
Choose a subset $S$ of the columns of $\partial_2^{\top} = \partial_1^{\triangle}$. Since $\partial_1^{\triangle}$ is the incidence matrix of $\graph^{\triangle}$, the columns of $S$ are
linearly independent if and only if they induce a forest on $\graph^{\triangle}$. On the other hand, the graph duality on $\graph$ implies that
% (see, for example, \cite[p.~289]{MR746795}; again for this correspondence we do not need 3-connectivity) 
a subset $\mathcal{T}$ of edges of $P^{\triangle}$ is a forest in
$\graph^\triangle$ if and only if the set of the edges dual to the edges of $\mathcal{T}$ is the complement of a spanning set in~$\graph$.
\end{proof}

\begin{lem}
The linear function $\partial_1^{\triangle} = \partial_2^{\top}: \RR^m \to \RR^{ m-n+2 }$ maps
the flow space $\ker \partial_1$ bijectively to the codimension-$1$ subspace of $\RR^{ m-n+2
}$ spanned by the columns of $\partial_1^{\triangle} = \partial_2^{\top}$. Thus the columns of
this matrix are normal vectors for an isomorphic copy of the cographic
arrangement living in $\RR^{ m-n+2 }$.
\end{lem}

\begin{proof}
Since $\partial_1 \, \partial_2 = 0$ respectively $\partial_2^{\top} \, \partial_1^{\top} = 0$, it follows with $ \partial_2^{\top} = \partial_1^{\triangle}$ that $\partial_1^{\triangle}\partial_1^{\top} = 0$. This means that each row of $\partial_1$ is perpendicular to each row of $\partial_1^{\triangle}$. This implies that any $\bw \in \ker(\partial_1) \cap \ker(\partial_1^{\triangle})$ stands perpendicular to all row vectors in both $\partial_1$ and $\partial_1^{\triangle}$ and thus
$\bw = \bzero$. 
Hence, $\partial_1^\triangle$ is injective when restricted to $\ker(\partial_1)$ and therefore it bijectively maps $\ker(\partial_1)$ to the column space of $\partial_1^\triangle$.
\end{proof}
Hence, in the case of Steinitz graphs (the statement is true in greater generality for planar
graphs) linear independence of normal vectors of the cographic hyperplane arrangement
$\cH(\partial_1^\triangle)$, given by  \Cref{lem:compl}, is induced in a geometric way.

Furthermore, in this case (since $\graph$ is planar), we can witness the Greene--Zaslavsky correspondence of
the regions of $\cH(\partial_1^\triangle)$ with totally cyclic orientations of $\graph$ directly:
The columns of $\partial_2^{\top} = \partial_1^{\triangle}$ are the normal vectors of the graphic
hyperplane arrangement of $\graph^{\triangle}$.
The acyclic orientations of $\graph^{\triangle}$ are in one-to-one correspondence with the
regions of the graphic arrangement of $\graph^{\triangle}$, which, in turn are in one-to-one
correspondence with the totally cyclic orientations of $\graph$ (see, for example,
\cite{Noy}; for this correspondence we do not need 3-connectivity).
 
% =================================

\section{Lattice Gale zonotopes}\label{sec:latticegaledual}

% \sophie{connection to \emph{Gale bundles} in \cite{BSFiberPoly}}

We now revisit the construction of the tocyclotope of a given graph $G$: starting
with the incidence matrix of $G$, we constructed a matrix representing the cographic
matroid, from which we generated a zonotope. This process is not confined to the incidence matrix of a graph, and thus we now start with a general integral matrix $\bA\in \ZZ^{{\nn}\times \ned}$ of rank $n$. 
Recall, that we define the \Def{lattice Gale dual} of $\bA$ as the matrix $\bD\in\ZZ^{\ned
\times (\ned-\nn)}$, which consists of a $\ned-{\nn}$ \emph{lattice} basis vectors of $\ker(\bA)\cap \ZZ^\ned$ as column vectors. 
By construction, $\bD^{\top}$ represents the dual of the matroid represented by $\bA$.
If we would just choose $m-n$ basis vectors from $\ker(\bA)$, the Ehrhart polynomial of $\Zono(\bD^{\top})$ would not be unique. Therefore, we need this further restriction to a lattice basis. 

This is reminiscent of the interplay of $\Zono(\bA)$ and $\Zono(\bA^\triangle)$ (which is
combinatorially equivalent to $\Zono(\bD^{\top})$), which we
alluded to in the introduction; McMullen~\cite{mcmullernzonotopes} calls
$\Zono(\bA^\triangle)$ and $\Zono(\bD^{\top})$ \emph{derived zonotopes} and the corresponding Gale diagrams \emph{zonotopal diagrams}. He completely described the face structure of
$\Zono(\bD^{\top})$. 
Our point is to extend this description to the arithmetic
  structure (in the sense of integer points) of the derived zonotope $\Zono(\bD^{\top})$; hence our construction of the lattice Gale dual.
 
Our goal is to describe the arithmetics of the \Def{lattice Gale zonotope} $\Zono(\bD^{\top})$ in terms of $\bA$. 
While this zonotope depends on the construction of the lattice basis that yields $\bD$, \Cref{thm:galezonehrhart} is the main result in this section and shows that the arithmetic of the lattice
Gale zonotope depends only on~$\bA$.%
% ----------------------suggestion----------------------------------
% \footnote{In the language of D'Adderio--Moci, a lattice zonotope corresponds to representable, torsion-free arithmetic matroid with GCD property \cite{dadderio_arithmetic_2013} and the Ehrhart polynomial is a specialization of the arithmetic Tutte polynomial \cite{dadderiomoci}.
% Our lattice Gale zonotopes then correspond to the dual representable torsion-free arithmetic matroids.
% }
%
% We will give a self-contained, elementary proof for \Cref{thm:galezonehrhart} in the language of this paper below.

We mention the connection to arithmetic matroids:
A lattice zonotope can be seen as an instance of a representable, torsion-free arithmetic matroid with GCD property \cite{dadderio_arithmetic_2013}, where the multiplicity of the matroid is $g$ as described in \Cref{remark:mofs2}.
Then the lattice Gale dual zonotope is a representation of the dual arithmetic matroid \cite[Section 3.4]{dadderio_arithmetic_2013}.
The Ehrhart polynomial of a zonotope is a specialization of the arithmetic Tutte polynomial \cite{dadderiomoci} and the arithmetic Tutte polynomial is studied in \cite{dadderio_arithmetic_2013}.
However, to the best of our knowledge there is no explicit statement on how to compute the  arithmetic Tutte polynomial for a dual arithmetic matroid.

\begin{theorem}\label{thm:galezonehrhart}
Let $\bA\in \ZZ^{{\nn}\times \ned}$ be of rank $n$, with lattice Gale dual $\bD\in\ZZ^{\ned \times (\ned-\nn)}$.
%  Let $\bA\in\ZZ^{n\times m}$ and $\Zono(\bA)$ be the zonotope spanned by its columns.  Assume that $\bA$ has full rank $n$, i.e., the zonotope $\Zono(\bA)\subseteq\RR^n$ is full dimensional.
% Assume further that there is a selection $R$ of $n$ linearly independent columns in $A$ such that 
% Assume further that we can write $A=[R|S]$ so that $R$ is invertible and $R^{-1}S$ is integral.
% Then we can compute define the dual lattice zonotope as $\Zono([I|(R^{-1}S)^{\top}])$ and the Ehrhart polynomial can be computed as 
% Choose a lattice basis for $\ker(\bA)$ and write it as the $m-n$ columns in $\bD\in\ZZ^{n\times (m-n)}$.
Then we can compute the Ehrhart polynomial of the associated lattice Gale zonotope as
\begin{align}
 \ehr_{\Zono(\bD^{\top})}(t)=\sum_{S}\frac{\g(\bA_S)}{\g(\bA)}\ t^{m-\lvert S\rvert}
\end{align}
where the sum is over all spanning sets $S\subseteq[m]$ in the matroid represented by~$\bA$ and $\g(\bA)$ is defined as in  \Cref{thm:stanley}.
\end{theorem}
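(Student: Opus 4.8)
The plan is to apply Stanley's \Cref{thm:stanley} directly to the matrix $\bD^T$, whose columns generate the lattice Gale zonotope. By Stanley's theorem, $\ehr_{\Zono(\bD^T)}(t) = \sum_{\bF} \g(\bF)\, t^{|\bF|}$, where $\bF$ ranges over linearly independent subsets of the columns of $\bD^T$. Since $\bD^T$ represents the dual of the matroid represented by $\bA$, a subset of columns of $\bD^T$ indexed by $J \subseteq [m]$ is linearly independent precisely when $J$ is independent in the dual matroid $M^\triangle$, which happens exactly when the complement $S := [m] \setminus J$ is spanning in $M$ (the matroid represented by $\bA$). This reindexing turns the sum over independent sets $J$ of $\bD^T$ into a sum over spanning sets $S$ of $\bA$, and since $|J| = m - |S|$, it produces the exponent $t^{m - |S|}$ as claimed. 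The genuine content, then, is to identify the greatest common divisor of maximal minors $\g(\bF)$, where $\bF$ consists of the columns of $\bD^T$ indexed by $J$, with the ratio $\g(\bA_S)/\g(\bA)$.

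To establish this identity, first I would reduce to the case of \emph{bases} by an argument that computes $\g$ for non-full-rank independent sets from full-rank data. The cleanest route is to extend $J$ to a basis $B$ of $M^\triangle$; equivalently, shrink the corresponding spanning set $S$ to a basis of $M$. For the full-rank (basis) case, the key is the classical relationship between the maximal minors of a matrix and those of its Gale/matroid dual. Concretely, if $B$ is a basis of $M^\triangle$ with complement a basis $B^c$ of $M$, then the $(m-n)\times(m-n)$ minor of $\bD^T$ on columns $B$ and the $n \times n$ minor of $\bA$ on columns $B^c$ are proportional, with the proportionality constant controlled by the determinantal data of $\bA$ and of the lattice basis chosen for $\ker(\bA) \cap \ZZ^m$. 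Since $\bD$ is built from a \emph{lattice} basis of $\ker(\bA)\cap\ZZ^m$, this proportionality constant is exactly $\g(\bA)$ up to sign: the minors of $\bD^T$ indexed by $B$ equal $\pm \det(\bA_{B^c})/\g(\bA)$, where $\g(\bA)$ is the gcd of the maximal minors of $\bA$. Taking gcd's over all maximal minors then yields $\g(\bD^T_B) = \g(\bA_{B^c})/\g(\bA)$ in the basis case.

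For the general (non-maximal) independent-set case, I would use that the gcd of size-$|\bF|$ minors of a matrix equals the index of the sublattice spanned by the columns of $\bF$ inside the saturation $\RR\text{-span}(\bF) \cap \ZZ^m$, equivalently the relative volume of the parallelepiped spanned by $\bF$. Phrasing $\g$ as such a lattice index, the identity $\g(\bD^T_J) = \g(\bA_S)/\g(\bA)$ for $S = [m]\setminus J$ becomes a statement about sublattice indices that follows from the duality of the relevant lattices (the column lattice of $\bA$ restricted to the coordinates $S$ versus the kernel lattice restricted to $J$). The orthogonality $\ker(\bA)\perp\text{row}(\bA)$, already exploited in \Cref{lem:flow_space}, provides the pairing making this a duality of lattices of complementary rank.

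The main obstacle I expect is pinning down the proportionality constant in the basis case and verifying it is \emph{exactly} $\g(\bA)$ rather than some other divisor of the minors. This is where the choice of a \emph{lattice} basis (as opposed to an arbitrary rational basis) for $\ker(\bA)\cap\ZZ^m$ is essential: it ensures $\bD$ generates the full kernel lattice, so no spurious extra factor appears. Making this precise likely requires a careful Cauchy--Binet or Smith-normal-form computation relating $\det(\bD^T_B)$ to $\det(\bA_{B^c})$, tracking how the gcd $\g(\bA)$ enters as the normalization of the kernel lattice. Once that constant is identified for bases, the extension to general independent sets via lattice indices, and the reindexing by complementation, are routine.
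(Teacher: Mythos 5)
Your proposal is correct in outline and shares the paper's skeleton---Stanley's \Cref{thm:stanley} applied to $\bD^T$, complementation between independent sets $J$ of the dual matroid and spanning sets $S$ of the primal (with $|J|=m-|S|$), and the identity $\g\bigl((\bD^T)_J\bigr)=\g(\bA_S)/\g(\bA)$ as the genuine content---but you establish that identity by a partly different route. The paper proves it in one stroke for \emph{all} independent sets $J$, with no basis/non-basis case split and no minor computations: Lemma~\ref{lem:latticegalenew} exhibits the explicit map $[\bv]\mapsto[\bA_{\compl\rho}\,\bv]$ as an isomorphism $\llfrac{\ZZ^{k}}{\zspan(\bD_{\compl\rho})}\to\llfrac{\zspan(\bA)}{\zspan(\bA_{\rho})}$, whose injectivity uses precisely the fact you flag as essential, namely $\zspan(\bD)=\ker(\bA)\cap\ZZ^m$; combining this with the Smith-normal-form interpretation of $\g$ as a lattice index (Remark~\ref{remark:mofs2}) yields $\g(\bD_{\compl\rho})=\g(\bA_\rho)/\g(\bA)$ directly. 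Your basis-case argument---complementary maximal minors of $\bA$ and of a lattice basis of its kernel are proportional, with the constant pinned to $1/\g(\bA)$ because saturation of $\ker(\bA)\cap\ZZ^m$ forces $\g(\bD)=1$---is a genuinely different, classical Pl\"ucker/Cauchy--Binet proof of that case, and it buys a concrete determinantal identity the paper never states. Two caveats: your extension to non-maximal $J$ does not in fact pass through the basis case (so that reduction is redundant), and the ``duality of lattices of complementary rank'' you invoke for the general case is, once made precise, exactly the paper's Lemma~\ref{lem:latticegalenew}---to complete the write-up you would need to exhibit the isomorphism (or an equivalent index computation), after which the two proofs essentially merge.
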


\begin{remark}
As (the usual) Gale duality can be used to efficiently compute the face structure of a
$d$-polytope with $k$ vertices where $k-d$ is small (but $d$ and $k$ may be large),
\Cref{thm:galezonehrhart} can be used to efficiently compute the Ehrhart polynomial of
a zonotope generated by $\bD^{\top} \in \ZZ^{(\ned-\nn) \times \ned}$ for large $m$ but small $n$:
here we have to understand only the arithmetic of the (much smaller) matrix $\bA\in
\ZZ^{{\nn}\times \ned}$.
Note that every full rank integer matrix $\bA$ can be seen as a lattice Gale dual.
Moreover,  the resulting matrix, after applying the lattice Gale dual construction twice, is unimodularly equivalent to the original matrix.
% \sophie{comment on lattice Gale dual of lattice Gale dual, more precisely: when is $\bA$ the lattice Gale dual of some full rank integral matrix?}
\end{remark}

\Cref{thm:cographicehrhart} is a special case of
\Cref{thm:galezonehrhart}, because the incidence matrix of a graph is totally
unimodular, and thus $\g(\bA_S) = 1$ for all $\bA_S$.
Indeed, the same reasoning implies the following specialization for any totally
unimodular matrix $\bA$,
i.e., when the associated matroid is \Def{regular}.

\begin{cor}
Let $\bA\in \ZZ^{{\nn}\times \ned}$ be a totally unimodular matrix of rank $n$, with lattice Gale dual $\bD\in\ZZ^{\ned \times (\ned-\nn)}$.
Then the Ehrhart polynomial of the associated lattice Gale zonotope is given by
\begin{align}
 \ehr_{\Zono(\bD^{\top})}(t)=\sum_{S} t^{m-\lvert S\rvert}= \sum_{k=n}^m d_k \, t^{m-k}
\end{align}
where the sum in the first sum is over all spanning sets $S$ in the regular matroid represented
by~$\bA$
and $d_k$ is the number of spanning sets of size $k$ in the matroid represented by~$\bA$.
\end{cor}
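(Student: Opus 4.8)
The plan is to read the corollary off Theorem~\ref{thm:galezonehrhart} by showing that the arithmetic weights $\g(\bA_S)/\g(\bA)$ all collapse to $1$ in the totally unimodular setting. First I would recall the defining property: a matrix is totally unimodular exactly when every square submatrix has determinant in $\{0,\pm 1\}$. In particular every maximal (size-$n$) minor of $\bA$, and of each column-submatrix $\bA_S$, lies in $\{0,\pm 1\}$.

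The key step is to evaluate the two gcd quantities appearing in Theorem~\ref{thm:galezonehrhart}. Since $\bA$ has rank $n$, at least one of its $n\times n$ minors is nonzero, hence equal to $\pm 1$; as all such minors lie in $\{0,\pm 1\}$, their greatest common divisor is $\g(\bA)=1$. The same argument applies to every spanning set $S$: by definition $\bA_S$ contains a basis, so $\bA_S$ has rank $n$ and therefore possesses a nonzero $n\times n$ minor, again equal to $\pm 1$ by total unimodularity. Hence $\g(\bA_S)=1$ for every spanning $S$, and the ratio $\g(\bA_S)/\g(\bA)$ equals $1$ throughout the sum.

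Substituting these values into Theorem~\ref{thm:galezonehrhart} immediately gives
\[
 \ehr_{\Zono(\bD^T)}(t)=\sum_{S} t^{m-\lvert S\rvert},
\]
which is the first claimed identity, the sum ranging over all spanning sets $S$ of the regular matroid represented by~$\bA$. For the second equality I would simply collect spanning sets by cardinality: every spanning set contains an $n$-element basis and is contained in $[m]$, so its size $k$ satisfies $n\le k\le m$. Writing $d_k$ for the number of spanning sets of size $k$ then yields $\sum_{S} t^{m-\lvert S\rvert}=\sum_{k=n}^{m} d_k\, t^{m-k}$.

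The hard part, such as it is, is conceptual rather than computational: one must verify that each spanning set genuinely attains rank $n$, so that a $\pm 1$ minor is actually present and the relevant gcd is $1$ rather than a vacuous or zero quantity. Once that is in place, the corollary is a direct specialization of the preceding theorem and requires no further calculation.
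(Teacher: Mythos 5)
Your proposal is correct and matches the paper's argument, which likewise specializes Theorem~\ref{thm:galezonehrhart} by noting that total unimodularity forces $\g(\bA_S)=\g(\bA)=1$ and then groups spanning sets by cardinality. Your added care in checking that each $\bA_S$ actually has rank $n$ (so the gcd is $1$, not vacuous) is a worthwhile explicit detail the paper leaves implicit, but it is the same route.
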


There are two main ingredients we will need to prove
\Cref{thm:galezonehrhart}. For the first we give an elementary proof here.
Given a matrix $\bA$, we denote by $\rspan(\bA)$ the real vector space spanned by its
columns and by $\zspan(\bA)$ the set of integer combination of its columns.

\begin{lem}[Gale duality for lattices]\label{lem:latticegalenew}% lower dimensional
Let $\bA\in \ZZ^{{\nn}\times \ned}$ be of rank $n$, with a lattice Gale dual ${\bD\in\ZZ^{\ned \times (\ned-\nn)}}$.
Every choice of $k\leq m$ linearly independent rows of $\bD$ indexed by some $\compl\rho\subseteq [m]$ yields a submatrix $\bD_{\compl\rho} \in \ZZ^{ k \times (m-n) }$.
% which is a map $\bD_{\compl\rho}\colon\ZZ^{m-n}\to\ZZ^k$.
Then the complement  $\rho\coloneqq[m]\setminus\compl\rho$ defines  a matrix $\bA_\rho \in \ZZ^{ n \times (m-k) } $ consisting of the % $m-k$ 
columns of $\bA$ indexed by $\rho$, which are spanning and hence contain a basis of $\RR^{n}$.
% 
% This induces a matrix $\bA_\rho \in \ZZ^{ n \times (m-k) } $ consisting of
% the % $m-k$ 
% columns of $\bA$ indexed by $\rho\coloneqq[m]\setminus\compl\rho$; note that they contain a basis of~$\RR^{n}$.
% and this defines a map $\bA_\rho\colon\ZZ^{m-k}\to \ZZ^\nn$.
%
Then there is a bijection 
\begin{align}
 \psi\ \colon \lfrac{\ZZ^{k}}{\zspan(\bD_{\compl\rho})}\ \to\ \lfrac{\zspan(\bA)}{\zspan(\bA_{\rho})}\,.
\end{align}
\end{lem}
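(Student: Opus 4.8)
The plan is to realize $\psi$ as the isomorphism induced by an explicit surjective group homomorphism, using that both sides are finite abelian groups: $\zspan(\bD_{\compl\rho})$ is a full-rank sublattice of $\ZZ^{k}$ (its $k$ independent rows force $\rank \bD_{\compl\rho}=k$), and $\zspan(\bA_\rho)$ is a full-rank sublattice of $\zspan(\bA)$ once we know $\bA_\rho$ contains a basis. First I would record the coordinate splitting $[m]=\rho\sqcup\compl\rho$, writing $\bA x=\bA_\rho x_\rho+\bA_{\compl\rho}x_{\compl\rho}$ and identifying $\ZZ^{k}$ with the integer vectors supported on $\compl\rho$. For the claim that $\bA_\rho$ contains a basis of $\RR^{n}$, I would either invoke the standard Gale/matroid-duality fact (the rows of $\bD$ indexed by $\compl\rho$ are independent iff $\rho$ is spanning in the matroid of $\bA$, since $\bD^{T}$ represents the dual matroid) or give a one-line dimension count: independence of the $\compl\rho$-rows says the projection $\ker\bA\to\RR^{\compl\rho}$ is onto, hence $\RR^m=\ker\bA+\RR^{\rho}$ (the coordinate subspace on $\rho$), and applying the surjection $\bA$ gives $\rspan(\bA_\rho)=\RR^{n}$.

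The central construction is the homomorphism
\[
 \phi\colon \ZZ^{k}\to \lfrac{\zspan(\bA)}{\zspan(\bA_\rho)},\qquad x\longmapsto \bigl[\,\bA_{\compl\rho}\,x\,\bigr].
\]
Surjectivity is immediate: any class is $[\bA x]$ for some $x=(x_\rho,x_{\compl\rho})\in\ZZ^m$, and modulo $\zspan(\bA_\rho)$ the term $\bA_\rho x_\rho$ vanishes, so $[\bA x]=\phi(x_{\compl\rho})$. The heart of the proof is the identification $\ker\phi=\zspan(\bD_{\compl\rho})$. Unwinding the definition, $x\in\ker\phi$ means $\bA_{\compl\rho}x=\bA_\rho y$ for some $y\in\ZZ^{m-k}$, equivalently the integer vector $u$ with $u_\rho=y$ and $u_{\compl\rho}=-x$ lies in $\ker(\bA)\cap\ZZ^{m}$; conversely every such $u$ produces an element of $\ker\phi$. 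Here is where the lattice (rather than merely real) Gale dual is essential: because the columns of $\bD$ form a $\ZZ$-basis of $\ker(\bA)\cap\ZZ^{m}$, such a $u$ is exactly $\bD z$ for some $z\in\ZZ^{m-n}$, whence $-x=u_{\compl\rho}=\bD_{\compl\rho}z$ and $x\in\zspan(\bD_{\compl\rho})$; the reverse inclusion follows by reading the same computation backwards. The first isomorphism theorem then yields the desired bijection $\psi$.

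I expect the main obstacle to be precisely this kernel computation, and specifically its integrality step. If $\bD$ only spanned $\ker\bA$ over $\RR$, or a proper finite-index sublattice of $\ker(\bA)\cap\ZZ^m$, the preimage of $\zspan(\bA_\rho)$ would be strictly larger than $\zspan(\bD_{\compl\rho})$ and the two orders would disagree; the hypothesis that $\bD$ is a genuine lattice basis of the integer kernel is what makes $\ker\phi$ land exactly on $\zspan(\bD_{\compl\rho})$. The only bookkeeping care needed is to track signs in the passage between $\bA_{\compl\rho}x=\bA_\rho y$ and $u\in\ker\bA$, and to confirm that the free choice of $y\in\ZZ^{m-k}$ imposes no constraint beyond $x\in\zspan(\bD_{\compl\rho})$, which it does not.
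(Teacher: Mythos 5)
Your proposal is correct and matches the paper's proof in all essentials: the same map $[\bv]\mapsto[\bA_{\compl\rho}\,\bv]$, the same surjectivity argument via splitting $\bA\bx=\bA_\rho\bx_\rho+\bA_{\compl\rho}\bx_{\compl\rho}$, and the same kernel computation hinging on $\ker(\bA)\cap\ZZ^m=\zspan(\bD)$ — you merely package well-definedness and injectivity as a kernel identification plus the first isomorphism theorem, where the paper checks them directly. Your dimension-count justification that $\rspan(\bA_\rho)=\RR^n$ is a nice bonus, since the paper only asserts this in the statement.
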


\begin{proof}
% To simplify notation in the following, we will 
We may assume without loss of generality that $\rho$ indexes the first $m-k$ columns
of $\bA$ and the first $m-k$ rows of $\bD$. Let
 \begin{align}
  \psi\ \colon \lfrac{\ZZ^{k}}{\zspan(\bD_{\compl\rho})}\ &\to\  \lfrac{\zspan(\bA)}{\zspan(\bA_{\rho})}\\
 [\bv]\ &\mapsto \ [\bA_{\compl\rho} \, \bv]\,.
\end{align}
 We first show that $\psi$ is well defined and injective. Let $\bv \in \ZZ^k$. Then 
\[
  \psi [\bv] = [\bA_{\compl\rho} \, \bv] = 0 \in \, \lfrac{\zspan(\bA)}{\zspan(\bA_{\rho})}
\]
if and only if
$\bA_{\compl\rho} \, \bv = \bA_\rho \bw$ for some $\bw \in \ZZ^{ m-k }$,
that is,
\[
  \begin{pmatrix}
     -\bw\\
     \bv
  \end{pmatrix}
  \in \, \ker(\bA) \cap \ZZ^m = \zspan(\bD) \, .
\]
This means $\binom{ -\bw } \bv = \bD \bu$ for some $\bu \in \ZZ^{ m-n }$, i.e.,
$\bv = \bD_{ \compl\rho } \, \bu$, which in turn means
\[
  [\bv] = 0 \in \lfrac{\ZZ^{k}}{\zspan(\bD_{\compl\rho})} \, .
\]

To show that $\psi$ is surjective, let $\by\in\zspan(\bA)$, so
$\bA \bx = \by$ for some $\bx = \binom{ \bx_\rho }{ \bx_{ \compl\rho } } \in \ZZ^m$.
Let $\bv = \bx_{ \compl\rho } \in \ZZ^{ k }$ and $\bw = - \bx_\rho \in \ZZ^{ m-k }$. Thus
\[
  \by
  = \bA \bx
  = - \bA_\rho \bw + \bA_{ \compl\rho }  \bv \, ,
\]
i.e., $\psi[\bv] = [\by]$.
\end{proof}

The second ingredient is the quantity $g(\bF)$ from \Cref{thm:stanley}.
Recall that, for a matrix $\bF$, we defined
$g(\bF)$ as the greatest common divisor of all maximal minors of $\bF$.

\begin{remark}\label{remark:mofs2}
We recall the Smith Normal Form of a matrix $\bA \in \ZZ^{ n \times m }$ of rank
$r$, namely,
\[
  \bS \, \bA \, \bT \ = \
  \left( \begin{array}{ccccccccc}
  d_1 & \\
  & d_2 \\
  & & \ddots \\
  & & & d_r & \mbox{\hspace{.7in}} \\[.4in]
  \mbox{} 
  \end{array} \right)
\]
where $\bS \in \ZZ^{ n \times n }$ and $\bT \in \ZZ^{ m \times m }$ are invertible
matrices and $d_1 \, d_2 \cdots d_r$ equals the gcd of all $r \times r$
minors of $\bA$. Thus $\bS$ and $\bT$ are integer-lattice preserving, from which we
deduce
\begin{equation}\label{eq:smithnfconsequence}
  \left| \lfrac{ \left( \rspan(\bA) \cap \ZZ^n \right) }{ \zspan(\bA) } \right| =
d_1 \, d_2 \cdots d_r \, .
\end{equation}
Thus, if $\bA$ has full (column or row) rank,
\[
  \left| \lfrac{ \left( \rspan(\bA) \cap \ZZ^n \right) }{ \zspan(\bA) } \right| = g(\bA) \, .
\] 
Note that, by definition, $g(\bA) = g(\bA^{\top})$.
For some subset $\bF\subseteq \ZZ^n$ of
linearly independent vectors, i.e., the case of full column rank, the number $\g(\bF)$
has various interpretations:
\begin{enumerate}
 \item $\g(\bF)$ is (by definition) the greatest common divisor of all minors of size $|\bF|$ of the matrix whose columns are the elements of~$\bF$,
 \item $\g(\bF)$ is the $\lvert \bF\rvert$-dimensional relative volume of the
parallelepiped spanned by~$\bF$.
% $\relvol_{\lvert \bF\rvert}(\Pi(\bF))$ of the parallelepiped
% \begin{equation}
%  \Pi(\bF)=\left\{\sum_{\ba\in \bF}\lambda_\ba \ba\, : \, \lambda_\ba\in[0,1]\right\} .
% \end{equation}
 \item % When $\lvert \bF\rvert=n$ then 
$\g(\bF)$ is the number of cosets of the
discrete subgroup generated by $\bF$, considered as a sublattice of the integer
points in the linear span of $\bF$. 
\end{enumerate}

\end{remark}

% Let $\bA\in \ZZ^{{\nn}\times \ned}$ be of rank $n$, with lattice Gale dual $\bD\in\ZZ^{\ned \times (\ned-\nn)}$.
% Every choice of $k\leq m$ linearly independent rows of $\bD$ yields a matrix $\bD_{\compl\rho} \in \ZZ^{ k \times (m-n) }$ whose rows are indexed by $\compl\rho\subseteq [m]$ and induces a map $\bD_{\compl\rho}\colon\ZZ^{m-n}\to\ZZ^k$.
% This on the other hand induces a matrix $\bA_\rho \in \ZZ^{ n \times (m-k) } $ consisting of
% the % $m-k$ 
% columns of $\bA$ indexed by $\rho\coloneqq[m]\setminus\compl\rho$; note that they contain a basis of~$\RR^{n}$.

\begin{cor}\label{cor:latticegaleimsmall}
With the same conditions and notations as in \Cref{lem:latticegalenew},
 \begin{align}
 \g(\bD_{\compl\rho})=\frac{\g(\bA_\rho)}{\g(\bA)}\,.
\end{align}
\end{cor}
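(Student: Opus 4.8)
The plan is to rewrite each of the three quantities $\g(\bD_{\compl\rho})$, $\g(\bA_\rho)$, and $\g(\bA)$ as the cardinality of a quotient of lattices, using the interpretations collected in \Cref{remark:mofs2}, and then to glue these together via the bijection $\psi$ of \Cref{lem:latticegalenew} together with the multiplicativity of lattice indices in a nested tower.

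First I would record that $\bD_{\compl\rho}$ has full row rank $k$, since its rows were chosen linearly independent; hence its columns span $\RR^{k}$, i.e.\ $\rspan(\bD_{\compl\rho})=\RR^{k}$. The full-rank formula in \Cref{remark:mofs2} then gives
\[
  \g(\bD_{\compl\rho}) = \left| \lfrac{\ZZ^{k}}{\zspan(\bD_{\compl\rho})} \right|.
\]
Next I would invoke the bijection $\psi$ from \Cref{lem:latticegalenew}, which identifies $\ZZ^{k}/\zspan(\bD_{\compl\rho})$ with $\zspan(\bA)/\zspan(\bA_\rho)$; in particular these two quotients have the same cardinality, so it remains only to show that $\bigl|\,\zspan(\bA)/\zspan(\bA_\rho)\,\bigr| = \g(\bA_\rho)/\g(\bA)$.

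For this last step I would work inside the tower of full-rank lattices $\zspan(\bA_\rho) \subseteq \zspan(\bA) \subseteq \ZZ^{n}$ in $\RR^{n}$, which is legitimate because $\bA$ has rank $n$ and $\bA_\rho$ contains a basis of $\RR^{n}$. Since both $\bA$ and $\bA_\rho$ have full row rank $n$, \Cref{remark:mofs2} gives $\g(\bA) = \bigl|\ZZ^{n}/\zspan(\bA)\bigr|$ and $\g(\bA_\rho) = \bigl|\ZZ^{n}/\zspan(\bA_\rho)\bigr|$. The multiplicativity of the index along the tower,
\[
  \left| \lfrac{\ZZ^{n}}{\zspan(\bA_\rho)} \right|
  = \left| \lfrac{\ZZ^{n}}{\zspan(\bA)} \right| \cdot \left| \lfrac{\zspan(\bA)}{\zspan(\bA_\rho)} \right|,
\]
then yields $\g(\bA_\rho) = \g(\bA)\cdot\bigl|\zspan(\bA)/\zspan(\bA_\rho)\bigr|$, and rearranging gives the claimed identity after combining with the two displays above.

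I do not expect a genuine obstacle: the statement is essentially bookkeeping once the index interpretation of $\g$ is in hand, and the only point requiring care is to check that every matrix involved has the full rank needed for the formula $\g(\cdot) = \bigl|(\rspan\cap\ZZ)/\zspan\bigr|$ to apply---namely full row rank $k$ for $\bD_{\compl\rho}$ and full row rank $n$ for both $\bA$ and $\bA_\rho$---and that the three lattices in the tower are genuinely nested and of finite index, so that index multiplicativity is valid.
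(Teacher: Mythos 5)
Your proof is correct and follows essentially the same route as the paper: full row rank plus \Cref{remark:mofs2} converts each $\g(\cdot)$ into a lattice index, the bijection $\psi$ of \Cref{lem:latticegalenew} identifies $\ZZ^{k}/\zspan(\bD_{\compl\rho})$ with $\zspan(\bA)/\zspan(\bA_\rho)$, and index multiplicativity in the tower $\zspan(\bA_\rho)\subseteq\zspan(\bA)\subseteq\ZZ^{n}$ finishes the computation. The only difference is that you spell out the multiplicativity step that the paper leaves implicit in its chain of equalities, which is a welcome clarification rather than a deviation.
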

\begin{proof}
% Since $ \zspan(\bA_\rho)\subseteq \zspan(\bA) \subseteq \ZZ^n  $ are (normal) subgroups,  we get from standard properties of the index:
% \begin{align}
%  \left| \lfrac{\ZZ^n}{\zspan(\bA_\rho)}\right|= \left| \lfrac{\ZZ^n}{\zspan(\bA)}\right| \cdot \left| \lfrac{\zspan(\bA)}{\zspan(\bA_\rho)}\right|
% \end{align}
% and hence with 
First note that  
\begin{equation}
 \rspan(\bD_{\compl\rho})=\RR^k\,,
\end{equation}
since the rows of $ \bD_{\compl\rho}$ are linearly independent and $k\leq m-n$.
By \Cref{remark:mofs2} and \Cref{lem:latticegalenew},
\[
 \g(\bD_{\compl\rho})=\left| \lfrac{\ZZ^{k}}{\zspan(\bD_{\compl\rho})}\right| 
    = \left| \lfrac{\zspan(\bA)}{\zspan(\bA_\rho)}\right| = \frac{ \left| \sfrac{\ZZ^n}{\zspan(\bA_\rho)}\right|}{ \left| \sfrac{\ZZ^n}{\zspan(\bA)}\right| }
    = \frac{\g(\bA_\rho)}{\g(\bA)}\,. \qedhere
\]
\end{proof}

\begin{proof}[Proof of \Cref{thm:galezonehrhart}]
By Stanley's \Cref{thm:stanley},
 \begin{align}
  \ehr_{\Zono(\bD^{\top})}(t)=\sum_{J}\g \left( (\bD^{\top})_J \right) t^{\lvert J\rvert}
 \end{align}
where $J$ indexes linearly independent subsets of columns of $\bD^{\top}$, i.e.,
 the sum is over independent sets $J\subseteq [m]$ in the dual matroid.
By matroid duality, these sets correspond to spanning sets $S=[m]\setminus J$ in the primal matroid.
By \Cref{remark:mofs2}, $\g ( (\bD^{\top})_J ) = \g(\bD_J)$, and from \Cref{cor:latticegaleimsmall}
we know that $\g(\bD_J)=\frac{\g(\bA_S)}{\g(\bA)}$,
and so
\begin{align}
 \ehr_{\Zono(\bD^{\top})}(t)=\sum_{S}\frac{\g(\bA_S)}{\g(\bA)}\ t^{m-\lvert S\rvert}
\end{align}
where the sum is now over spanning sets in the primal matroid.
\end{proof}

\section{Tocyclotopes for signed graphs}\label{sec:signedtoyclotopes}

The goal of this section is to construct the signed tocyclotope and then compute its Ehrhart polynomial in terms of signed graph-theoretic data. 
We could define the signed tocyclotope as a lattice Gale dual of the signed acyclotope as explained in the previous section. 
However, there is a more combinatorial and concrete route via bidirected network matrices, which can be seen in analogy to the graphic case. 
% we will take a slightly different route by first using a more combinatorial and concrete approach via bidirected network matrices.
% This will easily fit into our more general framework.
This construction is  in fact a special case of our more general framework in \Cref{sec:latticegaledual}, which will help us with the computation of the Ehrhart coefficients for the tocyclotopes.
% We then use \Cref{thm:galezonehrhart} to 

As mentioned in \Cref{ssec:signedgraphs}, oriented signed graphs are equivalent to bidirected graphs. 
Bidirected graphs were first defined by Edmonds and Johnson
\cite{jack_edmonds_introduction_1967,jack_edmonds_matching_1970}.
Appa and Kotnyek  \cite{appakotnyek} studied a bidirectional  analog of network matrices. 
One of their central results, which we will use, is conditions on when the duals of those matrices is integral. 
In general, those inverses are half-integral, a result that first appeared in \cite{bouchet_nowhere-zero_1983}.
For more information see also the references in \cite{bolkerzaslavsky, bouchet_nowhere-zero_1983, kotnyek_generalization_2002}.
% 
% -----------------------------------%notes
% 
% 
% % 
% % For that we will use a construction described in \cite{appakotnyek} (and less explicitly in \cite{bolkerzaslavsky, bouchet_nowhere-zero_1983, bolker_bracing_1979}...?),
% % which turns out to fit into the general framework described in \Cref{sec:latticegaledual}. 
% % 
% % 
% % We need Algorithm 1 in \cite{appakotnyek}. Bolker/Zaslavsky \cite{bolkerzaslavsky} give a reformulation  based on \cite{bolker_bracing_1979}  
% % 
% % half-integrality result (or reformulation) was first published in \cite{bouchet_nowhere-zero_1983} nowhere zero flows on bidirected graphs, in particular bound on the width if no signed graphic ithsmus
% 
% 
% more potentially relevant literature on flow spaces / flow lattices of signed graphs
% \cite{chenwang}
% \cite{chen_resolution_2017}
% 
% ------------------------------------%end of notes

From now on we want to assume that our (signed) graph is connected. 
If it is not connected, we can apply the following results to each of the connected
components and then take the appropriate product;
we will give more details in \Cref{wlogproducts} below.
Additionally, we will assume that the  incidence matrix $\bA_\sgraph \in \ZZ^{{\nn}\times \ned}$ has
full rank, i.e., $\rank(\bA_\sgraph)=\nn$.
If this is not the case then the signed graph is balanced and can
hence be considered as an unsigned graph. % as explained in previous sections.

\subsection{Binet matrices and the tocyclotope}\label{sec:signedcoacyclotope}

We will give the definition of a binet matrix%
\footnote{\hspace{1pt} ``The term binet is used here as a short form for \emph{bi}directed \emph{net}work,
but by coincidence it also matches the name of Jacques Binet (1786--1856) who worked on the foundations of matrix theory and gave the rule of matrix multiplication.''  \cite[page 46]{kotnyek_generalization_2002}}
as it was introduced by Appa and Kotnyek \cite{appakotnyek} in order to generalize the dual of network matrices to bidirected graphs.%
\footnote{Note that the network matrix is the reduced incidence matrix of a graph, while the binet matrix is a part of the dual matrix in the signed graphic case.}
% \sophie{write a footnote about network matrix vs binet matrix (see reviewer comment).
% differences: network---whole matrix in primal, binet matrix---part of the dual }

Let $\bA_\sgraph\in\ZZ^{\nn\times\ned}$ be the incidence matrix of the signed graph $\sgraph$
and let $T\subseteq E$ be a subset of the edges of $\sgraph$ that forms a basis, as
discussed in \Cref{thm:signedindependence}.
This implies that the submatrix $\bT\in\ZZ^{\nn\times\nn}$ of $\bA_\sgraph$ formed by choosing the columns indexed by $T$ is invertible over $\RR$.
% \sophie{here we mean invertible over $\RR$, not necessarily $\det(\bT)=1$}.
After reordering columns, we can write the incidence matrix $\bA_\sgraph$ as $[\bR \, | \, \bT]$, where $\bR\in\ZZ^{\nn\times(\ned-\nn)}$ is the matrix formed from columns indexed by $R\coloneqq E\setminus T$.
Then we multiply $\bA_\sgraph=[\bR \, | \, \bT]$ with $\bT^{-1}$ from the left to obtain 
\begin{equation}
 \bT^{-1}\bA_\sgraph % = \bT^{-1}[\bR \, | \, \bT]
 =[\bT^{-1}\bR \, | \, \bI] = [\bB \, | \, \bI]\,,
\end{equation}
where $\bI\in\ZZ^{\nn\times\nn}$ is the unit matrix and $\bB\coloneqq \bT^{-1}\bR\in\RR^{\nn\times(\ned-\nn)}$.
The matrix $\bB$ is called the \Def{binet matrix}. %
% \footnote{ ``The term binet is used here as a short form for \emph{bi}directed \emph{net}work,
% but by coincidence it also matches the name of Jacques Binet (1786--1856) who worked on the foundations of matrix theory and gave the rule of matrix multiplication.''  \cite[page 46]{kotnyek_generalization_2002}}
% and was introduced by Appa and Kotnyek \cite{appakotnyek} as a generalization of network matrices to bidirected graphs.
%
 Appa and Kotnyek further present a graphical algorithm to compute binet matrices \cite{appakotnyek}. 
The algorithm gives an easier and more direct way of computing binet matrices; 
%, relies on the intuition of flows on bidirected graphs, and is a useful perspective to prove properties of binet matrices. The algorithm 
see also~\cite{bolkerzaslavsky}.
% We will follow the matrix-theoretic interpretation here.
% 
\begin{lem}[{\cite[Lemma 17]{appakotnyek}}]\label{lem:integral} 
 Let $\sgraph$ be a signed graph and $T\subseteq E$ be a subset that forms a maximal pseudo-forest.
 The binet matrix $\bB=\bT^{-1}\bR$ is integral if and only if 
%  the basis $R\subseteq E$ chosen to compute $B=R^{-1}S$ fullfills
 one of the following conditions holds:
 \begin{enumerate}[(a)]
  \item\label{lem:integral:itm:halfedge} every connected component in the maximal pseudo-forest formed by $T$ is a (signed) halfedge-tree, or
  \item\label{lem:integral:itm:connected}  $\sgraph$ does not contain halfedges and $T$ has one connected component.
 \end{enumerate}
\end{lem}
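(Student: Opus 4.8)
The plan is to reduce integrality of $\bB=\bT^{-1}\bR$ to a lattice-membership test, one non-basis edge (column) at a time, and then read off the combinatorial conditions. First I would record the elementary equivalence: for the column $\mathbf{r}_e$ of $\bR$ indexed by a non-basis edge $e$, we have $\bT^{-1}\mathbf{r}_e\in\ZZ^{\nn}$ if and only if $\mathbf{r}_e\in\bT\,\ZZ^{\nn}=\zspan(T)$, so $\bB$ is integral exactly when every column of $\bR$ lies in $\zspan(T)$. Since a basis $T$ (a maximal pseudo-forest of the connected, full-rank graph $\sgraph$, in the sense of \Cref{thm:signedindependence}) has vertex-disjoint connected components $S_1,\dots,S_c$, the matrix $\bT$ is block diagonal after reordering, whence $\zspan(T)=L_1\oplus\cdots\oplus L_c$ with $L_i\subseteq\ZZ^{S_i}$ the integer span of the incidence columns of component~$i$. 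Each basis component is a halfedge-tree, loop-tree, or pseudo-tree: a signed-tree component on $k$ nodes contributes only $k-1$ to the rank, so its presence would force $\lvert T\rvert<\nn$, impossible for a basis. It therefore remains to compute each $L_i$ and test each $\mathbf{r}_e$ blockwise.

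The heart of the argument is the determination of $L_i$, and this is the step I expect to require the most care. For a halfedge-tree, \Cref{lem:relvolpi} gives $\g(\bT_i)=2^{\pc+\lc}=1$, so $\bT_i$ is unimodular and $L_i=\ZZ^{S_i}$. For a loop-tree or pseudo-tree, \Cref{lem:relvolpi} gives $\g(\bT_i)=2$, so $L_i$ has index $2$ in $\ZZ^{S_i}$; I would pin it down exactly via the parity homomorphism $\phi\colon\ZZ^{S_i}\to\ZZ/2\ZZ$, $\phi(\bx)=\sum_{v}x_v\pmod 2$. Every link column $\pm\be_u\pm\be_w$ (positive or negative link) has coordinate sum in $\{-2,0,2\}$, and a negative-loop column $\pm2\be_u$ has sum $\pm2$; thus $\phi$ vanishes on all columns of $\bT_i$, i.e.\ $L_i\subseteq\ker\phi$. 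Since $\phi$ is surjective, $\ker\phi$ also has index $2$, so $L_i=\ker\phi=\{\bx\in\ZZ^{S_i}:\sum_v x_v\equiv 0\pmod 2\}$ is the even-sum sublattice.

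With $L=\bigoplus_i L_i$ in hand I would test each non-basis edge $e$ blockwise. A negative-loop column $\pm2\be_u$ (even sum) and a within-component link column $\pm\be_u\pm\be_w$ (even sum) are supported in a single block and automatically lie in $L$. A halfedge column $\pm\be_u$ has odd sum in its block, hence lies in $L$ only if that block is a halfedge-tree; a link joining two distinct components contributes an odd-sum vector $\pm\be_u$, $\pm\be_w$ in each of its two blocks, so it lies in $L$ only if both are halfedge-trees. Therefore $\bB$ is integral if and only if every non-basis halfedge sits in a halfedge-tree component and every cross-component link joins two halfedge-trees.

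Finally I would match this with conditions \eqref{lem:integral:itm:halfedge}--\eqref{lem:integral:itm:connected}. If \eqref{lem:integral:itm:halfedge} holds then all $L_i=\ZZ^{S_i}$ (equivalently $\det\bT=\pm1$) and $\bB$ is integral; if \eqref{lem:integral:itm:connected} holds there are neither halfedges nor cross-component links, so again $\bB$ is integral. Conversely, assume $\bB$ integral and that \eqref{lem:integral:itm:halfedge} fails, so some component is a loop- or pseudo-tree. If $c\geq 2$, connectivity of $\sgraph$ produces a non-basis edge from that component's vertex set to its complement, necessarily a cross-component link with an endpoint in a non-halfedge-tree block, contradicting the test; hence $c=1$ and the single component spans all of $\sgraph$. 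Any halfedge would then be a non-basis edge at a node of this loop- or pseudo-tree, again failing the halfedge test; hence $\sgraph$ has no halfedges, which is precisely~\eqref{lem:integral:itm:connected}.
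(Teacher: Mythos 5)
Your proof is correct, and the comparison here is of a special kind: the paper gives no proof of this statement at all, but imports it verbatim as \cite[Lemma~17]{appakotnyek}, where it is established via the graphical, network-flow-style description of binet matrices (entries of $\bB$ are read off from flows routed through the basis, with non-integral entries arising from routing around odd circles and negative loops). Your route is genuinely different and self-contained modulo results the paper already quotes: the reduction of integrality of $\bT^{-1}\bR$ to the lattice test $\mathbf{r}_e\in\zspan(\bT)$, the block decomposition of $\zspan(\bT)$ along the vertex-disjoint components of $T$ (together with the correct observation that a basis has no plain signed-tree components), and the identification of each block lattice --- all of $\ZZ^{S_i}$ for halfedge-tree blocks, and for loop-/pseudo-tree blocks the even-coordinate-sum sublattice, pinned down by playing the index-$2$ fact from \Cref{lem:relvolpi} and \Cref{remark:mofs2} against the parity homomorphism --- are all sound, and your case analysis of non-basis columns (negative loops and intra-block links always pass the test; halfedge columns and cross-block links pass only through halfedge-tree blocks) correctly yields the equivalence with (a)~or~(b). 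One point deserves emphasis: your argument uses that $\sgraph$ is connected and that $\bA_\sgraph$ has full rank (so $\bT$ is square), which are standing assumptions of the section rather than hypotheses of the lemma as displayed, and they are genuinely needed --- for a disconnected graph consisting of, say, two unbalanced pseudo-tree components and no halfedges, $\bB$ is integral while neither (a) nor (b) holds --- so you were right to invoke connectivity explicitly in the converse. What your approach buys is an elementary algebraic proof that meshes with the arithmetic toolkit the paper uses elsewhere ($\g$, Smith normal form, lattice indices) and makes transparent exactly where the factor $2$ and the connectivity hypothesis enter; what the original flow-based proof buys instead is the combinatorial meaning of the individual entries of $\bB$, which is what the paper's open problems section suggests exploiting.
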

Since we assumed the signed graph $\sgraph$ to be connected, we can always choose a pseudo-tree $T\subseteq E$ that fulfills one of the conditions in \Cref{lem:integral}:
If the signed graph contains halfedges, choose a connected basis that contains one of the halfedges (case \ref{lem:integral:itm:halfedge} in \Cref{lem:integral}),
otherwise choose any other connected basis (case \ref{lem:integral:itm:connected} in \Cref{lem:integral}).
Then we know that the matrix $[\bB\, |\, \bI]\in \ZZ^{n\times m}$ has integral coefficients.

It is immediate from the construction that the rows of $\bD^{\top}\coloneqq[\bI\,|-\bB^{\top}]\in \ZZ^{(m-n)\times m} $,
where here $\bI\in\ZZ^{(\ned-\nn)\times(\ned-\nn)}$ and $-\bB^{\top}=-(\bT^{-1}\bR)^{\top}\in\ZZ^{(\ned-\nn)\times\nn}$,
are contained in the kernel of $\bA_\sgraph$.
Since the matrix has full rank $m-n$, its rows span the kernel of $\bA_\sgraph$.
Note that $\g([\bI\,|-\bB^{\top}])=1$ because the maximal minor given by the identity matrix $\bI$ equals one and hence the greatest common divisor of all minors as well.
From \Cref{remark:mofs2} it follows that the rows of $[\bI\,|-\bB^{\top}]$ form a lattice basis for $\ker(\bA_\sgraph)\cap \ZZ^m$. 
So $\bD^{\top}=[\bI\,|-\bB^{\top}]\in \ZZ^{(m-n)\times m} $ is the transpose of a lattice Gale dual of $\bA_\sgraph$.
Therefore this combinatorial construction fits into the general framework from \Cref{sec:latticegaledual}.

Hence, we define the \Def{tocyclotope for signed graphs} as the zonotope ${\Zono([\bI|-\bB^{\top}])}$. 
% In the following we will show that this is well defined in terms of combinatorial face structures and arithmetic properties (Ehrhart polynomial).
% Since we assumed the signed graph to be connected we can always choose a basis so that the tocyclotope is a lattice zonotope: 
% If the signed graph contains a halfedges, choose a connected basis that contains one of the halfedges (case \ref{lem:integral:itm:halfedge} in \Cref{lem:integral}), otherwise choose any other connected basis (case \ref{lem:integral:itm:connected} in \Cref{lem:integral}).
% From \Cref{ssec:ehrtocyclotope} it will follow that this convention ensures not only that the tocyclotope has a well-defined and unique combinatorial face structure (\Cref{lem:facestocyclotope}) but also a well-defined and unique Ehrhart polynomial.
As in the case of unsigned graphs, this zonotope depends on our choice of~$\bT$; however, not only is its
face structure independent of this choice (by \cite{mcmullernzonotopes}), the same is true for its
Ehrhart polynomial. 
This follows from \Cref{thm:galezonehrhart} or from the observation that for every right choice of $\bT$ the rows of the resulting matrix $[\bI\,|-\bB^{\top}]$ form a lattice basis for $\ker(\bA_\sgraph)\cap \ZZ^m$ and therefore are  unimodular equivalent.

% Note that this is a special case of a construction presented in \cite{mcmullernzonotopes} called \emph{zonal diagrams} or \emph{central representation} of a zonotope and the \emph{derived zonotope}.
% \sophie{note: special case of McMullen \cite{mcmullernzonotopes} construction}
% Indeed, denoting $\bD\coloneqq [\bI \, | \, -\bB^{\top}]^{\top}$,
% we have $[\bB \, | \, \bI]\cdot \bD=0$, hence the columns of $\bD$ generate the kernel of $\bA$.
% The columns of $\bD^{\top}=[\bI \, | \, -\bB^{\top}]$ also give a representation for the dual matroid  to the signed graphic matroid 
% (by general matroid duality; see, e.g., \cite[Section 9.3.]{welsh_matroid_2010}).

Parallel to the definitions for graphs, the \Def{flow space} of the signed graph
$\sgraph$ is $\ker(\bA_\sgraph)$, and the \Def{signed cographic arrangement} is
the hyperplane arrangement induced by the coordinate hyperplanes of $\RR^m$ on
$\ker(\bA_\sgraph)$; see, e.g., \cite{nnz,chenwang,chen_resolution_2017}.%\cite[Section~4]{nnz}
The proof of the following lemma is almost verbatim that of \Cref{lem:flow_space}.

\begin{lem}\label{lem:signed_flow_space}
Let $\sgraph$ be a signed graph without coloops and
let $ \bD^{\top}=[\bI|-\bB^{\top}]$ be as described above.
The linear surjection $\bD^{\top}: \RR^m \to \RR^{ m-n }$ maps the flow space
$\ker(\bA_\sgraph)$ bijectively to $\RR^{ m-n }$. Thus the columns of the matrix $\bD^{\top}$ are normal vectors for an isomorphic copy of the signed cographic arrangement living in~$\RR^{ m-n}$.
\end{lem}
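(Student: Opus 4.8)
The plan is to mimic the proof of \Cref{lem:flow_space} essentially verbatim, as the paper itself promises. Since $\bD^T$ is a linear map from $\RR^m$ to $\RR^{m-n}$ whose source has dimension $m$ and whose image is at most $(m-n)$-dimensional, while $\ker(\bA_\sgraph)$ has dimension exactly $m-n$ (because $\bA_\sgraph$ has full rank $n$ by our standing assumption), the claimed bijection between $\ker(\bA_\sgraph)$ and $\RR^{m-n}$ will follow from a dimension count once I establish that the two kernels intersect trivially, i.e.\ $\ker(\bA_\sgraph)\cap\ker(\bD^T)=\{\bzero\}$.

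The key step is therefore to show this trivial intersection. I would argue as follows. By construction each row of $\bD^T=[\bI\,|-\bB^T]$ lies in $\ker(\bA_\sgraph)$; equivalently, each row of $\bD^T$ is orthogonal to each row of $\bA_\sgraph$, since $\bA_\sgraph\bD=\bzero$ means the rows of $\bA_\sgraph$ are perpendicular to the columns of $\bD$, which are the rows of $\bD^T$. Now take any $\bw\in\ker(\bA_\sgraph)\cap\ker(\bD^T)$. Being in $\ker(\bA_\sgraph)$ makes $\bw$ perpendicular to every row of $\bA_\sgraph$; being in $\ker(\bD^T)$ makes $\bw$ perpendicular to every row of $\bD^T$. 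Together the rows of $\bA_\sgraph$ (spanning an $n$-dimensional space, as $\bA_\sgraph$ has rank $n$) and the rows of $\bD^T$ (spanning an $(m-n)$-dimensional space) are orthogonal complements inside $\RR^m$, so their combined span is all of $\RR^m$; hence $\bw$ is perpendicular to all of $\RR^m$, forcing $\bw=\bzero$.

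From $\ker(\bA_\sgraph)\cap\ker(\bD^T)=\{\bzero\}$ I then conclude that the restriction of $\bD^T$ to $\ker(\bA_\sgraph)$ is injective. Since both $\ker(\bA_\sgraph)$ and $\RR^{m-n}$ have dimension $m-n$, an injective linear map between them is an isomorphism, so $\bD^T$ maps the flow space bijectively onto $\RR^{m-n}$. Consequently the coordinate-hyperplane arrangement that cuts out the signed cographic arrangement on $\ker(\bA_\sgraph)$ is carried isomorphically to the arrangement in $\RR^{m-n}$ whose hyperplanes have the columns of $\bD^T$ as normals, which is exactly the second assertion.

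The only point requiring care, and the main obstacle, is verifying that the rows of $\bA_\sgraph$ and the rows of $\bD^T$ really are orthogonal complements: this needs both the dimension bookkeeping (rank $n$ for $\bA_\sgraph$ and rank $m-n$ for $\bD^T$, the latter guaranteed by the identity block $\bI$ in $\bD^T$) and the relation $\bA_\sgraph\bD=\bzero$, which holds because the rows of $\bD^T$ were constructed to span $\ker(\bA_\sgraph)$. The hypothesis that $\sgraph$ has no coloops is what ensures $\bD^T$ is an honest surjection onto $\RR^{m-n}$ with the flow space having the expected dimension, paralleling the bridgeless assumption in \Cref{lem:flow_space}; everything else is the same orthogonality argument transported to the signed setting.
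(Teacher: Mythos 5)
Your proof is correct and takes essentially the same approach as the paper, which states that its proof is almost verbatim that of \Cref{lem:flow_space}: show $\ker(\bA_\sgraph)\cap\ker(\bD^T)=\{\bzero\}$ via the orthogonality of the rows of $\bA_\sgraph$ and $\bD^T$, then conclude bijectivity by a dimension count. (One small quibble: the no-coloop hypothesis is not what makes $\bD^T$ surjective --- the identity block already guarantees that --- rather it ensures no column of $\bD^T$ vanishes, so every column genuinely defines a hyperplane of the signed cographic arrangement; this does not affect the validity of your main argument.)
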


Although this is not a main theme of this paper, we add a remark about the face structure
of the tocyclotope, as it follows directly from (oriented matroid) duality.
We recall that a cycle is a minimally dependent set of edges that is oriented in such a way
that it has neither a sink nor a source.
Recall that an orientation is totally cyclic if every (bioriented) edge is contained in a cycle.
The regions of the signed cographic arrangement, and therefore the vertices of the signed tocyclotope, correspond bijectively to totally cyclic orientations of the signed graph $\sgraph$; see \cite[proof of Theorem 4.5.(b)]{nnz}.
Higher dimensional faces of the signed tocyclotope can be understood via the flats of the signed cographic arrangement.
% \matt{to be continued...}

\subsection{The Ehrhart polynomial of the tocyclotope}\label{ssec:ehrtocyclotope}

The goal of this section is to prove the combinatorial description of the coefficients in the Ehrhart polynomial of signed tocyclotopes.
% and hence showing that those are uniquely defined by the above construction.

\begin{theorem}\label{thm:signedtocyclotopeehr} %[Ehrhart polynomial of the tocyclotope] 
Let $\sgraph$ be a connected signed graph whose incidence matrix has full rank. 
Choose a connected basis $T\subseteq E$ that contains a halfedge if $\sgraph$ contains a halfedge.
Then the Ehrhart polynomial of the tocyclotope $\Zono([\bI \, |-\bB^{\top}])\subseteq\RR^{\ned-\nn}$ is
\begin{equation}
 \ehr_{\Zono([\bI \, |-\bB^{\top}])}(t)= \begin{cases}
                  \sum_{S} 2^{\mplc(S)} t^{\ned-\lvert S\rvert} & \text{if $\sgraph$ contains a halfedge,}\\
                  \sum_{S} 2^{\mplc(S)-1} t^{\ned-\lvert S\rvert} & \text{if $\sgraph$ does not contain a halfedge,}
                 \end{cases}
                 \label{eq:tocyclotopeehrhart}
\end{equation}
where the sums run over all sets $S\subseteq E$ that contain a basis of $\sgraph$, i.e., $\subsgraph{S}$ contains a maximal pseudo-forest of $\sgraph$, and 
% $\pc(S)+\lc(S)$ is the smallest number of pseudo-tree components plus loop-tree components that a maximal pseudo-forest contained in $S$ can have.
\begin{equation}
 \mplc(S)\coloneqq \min_{\substack{\tilde{T}\subseteq S}}
%  \\ \tilde{T}\text{ is a max. pseudo-forest in }\sgraph}} 
 \left(\pc(\tilde{T})+\lc(\tilde{T})\right)\,,
\end{equation}
with the minimum taken over all maximal pseudo-forests $\tilde{T}$ in $\sgraph$ contained in the spanning set $S$.
% \sophie{need to specify what $\pc(S)+\lc(S)$ means here...}
%  \sophie{rephrase by grouping summands?}
\end{theorem}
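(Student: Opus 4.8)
The plan is to apply Theorem~\ref{thm:galezonehrhart} directly to the matrix $\bA_\sgraph$, whose lattice Gale dual is exactly $\bD = [\bI \, | -\bB^T]^T$ as established in the paragraphs preceding the statement. Theorem~\ref{thm:galezonehrhart} then gives
\[
 \ehr_{\Zono([\bI \, |-\bB^T])}(t)=\sum_{S}\frac{\g((\bA_\sgraph)_S)}{\g(\bA_\sgraph)}\ t^{\ned-\lvert S\rvert}\,,
\]
where $S$ ranges over all spanning sets of the signed graphic matroid, i.e.\ all $S\subseteq E$ whose subgraph $\subsgraph{S}$ contains a maximal pseudo-forest. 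So the entire problem reduces to computing the two arithmetic quantities $\g(\bA_\sgraph)$ and $\g((\bA_\sgraph)_S)$ in signed-graphic terms and taking their ratio.

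\emph{First} I would compute the denominator $\g(\bA_\sgraph)$. Since $\bA_\sgraph$ has full row rank $\nn$, the number $\g(\bA_\sgraph)$ is the gcd of the maximal ($\nn\times\nn$) minors. Each nonzero such minor corresponds to a basis $T$, i.e.\ a maximal pseudo-forest, and by \Cref{lem:relvolpi} its absolute value is $2^{\pc(T)+\lc(T)}$. The key arithmetic input is \Cref{lem:integral}: the chosen connected basis $\bT$ is integrally invertible, which forces $\g(\bA_\sgraph)$ to equal the minimal value among these minors. Concretely, when $\sgraph$ contains a halfedge, the connected basis is a halfedge-tree, giving $\pc=\lc=0$ and hence $\g(\bA_\sgraph)=1$; when $\sgraph$ has no halfedge, a connected basis is necessarily a single loop-tree or pseudo-tree component, giving $\pc+\lc=1$ and hence $\g(\bA_\sgraph)=2$. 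This explains the two cases in the statement and matches the $2^{-1}$ discrepancy between them.

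\emph{Next} I would compute the numerator $\g((\bA_\sgraph)_S)$ for a spanning set $S$. Here $(\bA_\sgraph)_S$ has more columns than rows (since $S$ spans), so $\g$ is the gcd of all maximal $\nn\times\nn$ minors obtained by choosing column-bases inside $S$. Again each such basis is a maximal pseudo-forest $T'\subseteq S$ with minor $2^{\pc(T')+\lc(T')}$, so the gcd is $2^{\min}$ where the minimum of $\pc(T')+\lc(T')$ runs over maximal pseudo-forests $T'\subseteq S$. This is exactly the quantity $\pc(S)+\lc(S)$ as defined in the theorem statement. Taking the ratio, $\frac{\g((\bA_\sgraph)_S)}{\g(\bA_\sgraph)}=2^{\pc(S)+\lc(S)}$ in the halfedge case and $2^{\pc(S)+\lc(S)-1}$ in the no-halfedge case, yielding the claimed formula.

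\emph{The main obstacle} will be justifying that the gcd of the maximal minors really equals the \emph{minimal} power of~$2$ appearing among them, rather than some smaller power arising from cancellation in the gcd — but since all minors are powers of $2$ (being $\pm 2^{\pc+\lc}$ by \Cref{lem:relvolpi}) and $\gcd$ of a set of powers of $2$ is the smallest one present, this is immediate once one verifies that the minimizing pseudo-forest is actually attained within $S$. For the denominator this attainment is guaranteed by the connected integral basis supplied by \Cref{lem:integral}; for the numerator it amounts to the elementary graph-theoretic fact that every spanning set contains a maximal pseudo-forest realizing the minimum number of pseudo-tree-plus-loop-tree components, which is the content of the last clause of the statement. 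The disconnected case is handled by the product reduction promised in \Cref{wlogproducts}.
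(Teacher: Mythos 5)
Your proposal is correct and follows essentially the same route as the paper's proof: apply \Cref{thm:galezonehrhart} to $\bA_\sgraph$ with lattice Gale dual $\bD=[\bI\,|-\bB^T]^T$, determine $\g(\bA_\sgraph)\in\{1,2\}$ by the halfedge case distinction, and compute $\g(\bA_S)=2^{\pc(S)+\lc(S)}$ as the minimal power of~$2$ among the basis minors inside $S$ (the paper's \Cref{lem:gofSforsignedgraphs}), everything resting on \Cref{lem:relvolpi}. One small inaccuracy worth noting: in the no-halfedge case $\bT$ is \emph{not} integrally invertible ($\det\bT=\pm 2$; \Cref{lem:integral} only asserts integrality of the product $\bB=\bT^{-1}\bR$, which is what makes $\bD$ a lattice Gale dual in the setup), but your gcd-of-powers-of-two argument never actually uses that claim, so the proof stands as written.
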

We will again apply Stanley's \Cref{thm:stanley} for zonotopes. 
For that we need a combinatorial understanding of $\g(\bJ)$, where the columns in the  submatrix
$\bJ$ of $\bD^{\top}=[\bI \, |-(\bT^{-1}\bR)^{\top}]$ are linearly independent.
Recall that they correspond to independent sets $J$ in the dual signed graphic matroid.
Hence they correspond to subsets of edges $S=E\setminus J$ in the signed graph $\sgraph$ that contain a basis, i.e., a maximal pseudo-forest.
\begin{cor}
 A subset of columns $\bJ$ of $\bD^{\top}=[\bI \, |-(\bT^{-1}\bR)^{\top}]$ (as constructed above) is  linearly independent if and only if the subset $\bS$ of columns in $\bA_\sgraph$ indexed by $S=E\setminus J$ is a spanning set, i.e., $S$ contains a maximal pseudo-tree in $\sgraph$.
 In this case,
 \begin{align}
  \g(\bJ)= \frac{\g(\bS)}{\g(\bA_\sgraph)}\,.
%             = \begin{cases}
%                    2^{\pc(S)+\lc(S)} \text{, if $\sgraph$ contains a halfedge,}\\
%                    2^{\pc(S)+\lc(S)-1} \text{, if $\sgraph$ does not contain a halfedge}\,.
%          \end{cases}
 \end{align}
\end{cor}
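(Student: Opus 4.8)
The plan is to recognize this corollary as a direct specialization of the general lattice Gale machinery already established, applied to the signed incidence matrix. The key observation is that $\bD^T = [\bI \mid -(\bT^{-1}\bR)^T]$ is, by the construction in \Cref{sec:signedcoacyclotope}, precisely the transpose of a lattice Gale dual of $\bA_\sgraph$ (this was verified just before the corollary, using $\g([\bI \mid -\bB^T]) = 1$ and \Cref{remark:mofs2} to conclude that its rows form a lattice basis of $\ker(\bA_\sgraph) \cap \ZZ^m$). Thus the abstract framework of \Cref{lem:latticegalenew} and \Cref{cor:latticegaleimsmall} applies verbatim with $\bA = \bA_\sgraph$ and $\bD$ the lattice Gale dual.

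First I would establish the linear-independence equivalence. The columns of $\bD^T$ are indexed by $[m]$, and a subset $\bJ$ is linearly independent exactly when the corresponding index set $J$ is independent in the matroid represented by $\bD^T$, which is the dual of the signed graphic matroid $\matroid(\sgraph)$. By the definition of the dual matroid, $J$ is independent in $\dualmatroid$ if and only if $S = E \setminus J$ is a spanning set in $\matroid(\sgraph)$. Invoking \Cref{thm:signedindependence}, spanning sets in $\matroid(\sgraph)$ are precisely those $S$ for which $\subsgraph{S}$ contains an inclusion-maximal pseudo-forest (a maximal pseudo-tree/pseudo-forest in the terminology of the statement). This gives the claimed characterization.

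For the arithmetic identity $\g(\bJ) = \g(\bS)/\g(\bA_\sgraph)$, I would apply \Cref{cor:latticegaleimsmall} directly. In the notation there, a choice of $k$ linearly independent rows of $\bD$ (equivalently $k$ linearly independent columns of $\bD^T$, indexed by $\compl\rho = J$) induces the complementary column set $\rho = S = [m] \setminus J$ of $\bA_\sgraph$; the corollary then states $\g(\bD_{\compl\rho}) = \g(\bA_\rho)/\g(\bA)$. Translating the notation, $\bD_{\compl\rho}$ is the row-submatrix of $\bD$ indexed by $J$, which satisfies $\g(\bD_J) = \g((\bD^T)_J) = \g(\bJ)$ by \Cref{remark:mofs2} (since $\g$ is invariant under transposition). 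Likewise $\bA_\rho = \bS$ and $\bA = \bA_\sgraph$, so the corollary reads $\g(\bJ) = \g(\bS)/\g(\bA_\sgraph)$, as desired.

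The main thing to verify carefully is that the hypotheses of \Cref{lem:latticegalenew} and \Cref{cor:latticegaleimsmall} are met, namely that $J$ indexes genuinely linearly independent columns of $\bD^T$ (so that $\rspan(\bD_J) = \RR^{|J|}$ and the counting arguments go through) and that the complementary columns $\bS$ of $\bA_\sgraph$ contain a basis of $\RR^n$; both follow from the matroid-duality correspondence between independent sets of $\dualmatroid$ and spanning sets of $\matroid(\sgraph)$ established in the first step. I do not anticipate a genuine obstacle here: the corollary is essentially a dictionary translation of the already-proved \Cref{cor:latticegaleimsmall} into signed-graph language, and the only care needed is matching the index-set conventions ($\compl\rho \leftrightarrow J$, $\rho \leftrightarrow S$) correctly.
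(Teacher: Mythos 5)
Your proposal is correct and follows the same route as the paper, whose entire proof is the single line ``This follows from \Cref{cor:latticegaleimsmall}''; your write-up simply makes explicit the notation dictionary ($\compl\rho \leftrightarrow J$, $\rho \leftrightarrow S$), the matroid-duality equivalence between independence in the dual and spanning sets in $\matroid(\sgraph)$, and the transposition invariance $\g(\bD_J)=\g((\bD^T)_J)$, all of which the paper leaves implicit or has established in the surrounding text. No gaps.
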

\begin{proof}
 This follows from \Cref{cor:latticegaleimsmall}.
%  \Cref{lem:relvolpi} 
\end{proof}
Therefore, it remains to understand the parameter $\g(\bS)$ for spanning sets in the signed graph.
% For this we will again make use of \Cref{remark:mofs2}.

% 
\begin{lem}\label{lem:gofSforsignedgraphs}
Let $S\subseteq E$ be a spanning set. Then there exists a maximal forest $F\subseteq S$ such that
 $\g(\bA_{\subsgraph{S}})=\g(\bA_{\subsgraph{F}})$.
 Moreover, this maximal forest $F$ will be one with a minimal number of pseudo-tree components plus loop-tree components.
\end{lem}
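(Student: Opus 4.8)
The plan is to relate the invariant $\g(\bA_{\subsgraph S})$ of the spanning set to the invariant $\g(\bA_{\subsgraph F})$ of a cleverly chosen maximal pseudo-forest inside it, using the interpretation of $\g$ from \Cref{remark:mofs2}(3) as the index $\lvert \rspan(\bA_{\subsgraph S}) \cap \ZZ^n : \zspan(\bA_{\subsgraph S})\rvert$. First I would observe that since $S$ is spanning, $\rspan(\bA_{\subsgraph S}) = \RR^n$, so $\g(\bA_{\subsgraph S}) = \lvert \ZZ^n : \zspan(\bA_{\subsgraph S})\rvert$, and likewise for any maximal pseudo-forest $F \subseteq S$ we have $\rspan(\bA_{\subsgraph F}) = \RR^n$, giving $\g(\bA_{\subsgraph F}) = \lvert \ZZ^n : \zspan(\bA_{\subsgraph F})\rvert$. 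Since $\zspan(\bA_{\subsgraph F}) \subseteq \zspan(\bA_{\subsgraph S})$, the index $\g(\bA_{\subsgraph F})$ is always a multiple of $\g(\bA_{\subsgraph S})$; the goal is thus to produce an $F$ for which the extra edges of $S \setminus F$ add nothing to the integer lattice generated, i.e. each such edge already lies in $\zspan(\bA_{\subsgraph F})$.

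The key structural input is \Cref{lem:relvolpi}: for a pseudo-forest $F$, we have $\g(\bA_{\subsgraph F}) = 2^{\pc(F)+\lc(F)}$. So minimizing $\g(\bA_{\subsgraph F})$ over maximal pseudo-forests $F \subseteq S$ is exactly minimizing $\pc(F) + \lc(F)$, which is the quantity appearing in the statement. The second step is therefore to choose $F$ to be a maximal pseudo-forest in $\subsgraph S$ with the smallest possible value of $\pc(F) + \lc(F)$, and to show that for this choice the lower bound $\g(\bA_{\subsgraph S}) \geq \g(\bA_{\subsgraph F})$ is actually an equality. Combined with the trivial divisibility $\g(\bA_{\subsgraph S}) \mid \g(\bA_{\subsgraph F})$ established above, it suffices to show $\g(\bA_{\subsgraph S}) \geq \g(\bA_{\subsgraph F})$, or equivalently that $\zspan(\bA_{\subsgraph S}) = \zspan(\bA_{\subsgraph F})$ for the optimal $F$.

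The main obstacle is the last step: showing that adding the remaining edges of $S$ to an optimally chosen pseudo-forest does not refine the integer lattice. The natural strategy is to argue edge by edge, or rather component by component, exploiting the fact that the factor of $2$ in $\g$ is contributed precisely by pseudo-tree and loop-tree components (the ``type-C/D'' half-integrality phenomenon underlying binet matrices). Concretely, I would analyze what happens when an edge $e \in S \setminus F$ is added to $F$: it creates a unique circuit $C$ with $F$, and by \Cref{thm:signedindependence} this circuit is either a balanced circle or a handcuff. If adding $e$ could strictly shrink the index (refine the lattice), one should be able to exchange $e$ for an edge of $C \cap F$ to obtain a new maximal pseudo-forest $F'$ with $\pc(F')+\lc(F') < \pc(F)+\lc(F)$, contradicting minimality of $F$. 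This is where the balanced/unbalanced combinatorics of signed circuits does the real work: a balanced circle in $C$ lets one trade an unbalanced (pseudo-tree or loop) component for a tree component, lowering $\pc + \lc$ and simultaneously explaining why the index cannot drop past the minimizer. I would make this precise by checking the possible circuit types and verifying that at the minimizer no such lattice-refining exchange exists, so that every $e \in S \setminus F$ lies in $\zspan(\bA_{\subsgraph F})$ and hence $\g(\bA_{\subsgraph S}) = \g(\bA_{\subsgraph F}) = 2^{\pc(F)+\lc(F)}$ with $\pc(F)+\lc(F)$ minimal, as claimed.
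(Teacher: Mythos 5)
Your proposal is correct in substance, but it takes a genuinely different and considerably longer route than the paper. The paper's proof is essentially a one-liner built on \Cref{remark:mofs2} and \Cref{lem:relvolpi}: since the incidence matrix has full rank, $\g(\bA_{\subsgraph{S}})$ is the greatest common divisor of all $n\times n$ minors of $\bA_{\subsgraph{S}}$; every nonzero such minor corresponds (via \Cref{thm:signedindependence}) to a maximal pseudo-forest $F\subseteq S$ and equals $\pm 2^{\pc(F)+\lc(F)}$ by \Cref{lem:relvolpi}; and the gcd of a collection of powers of $2$ is simply the smallest one, attained by a minimizer of $\pc(F)+\lc(F)$. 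No lattice indices, basis exchanges, or circuit-type case analysis are needed. Your route --- interpreting $\g$ as the index $\bigl\lvert \ZZ^n : \zspan(\bA_{\subsgraph{S}})\bigr\rvert$, observing the divisibility $\g(\bA_{\subsgraph{S}})\mid\g(\bA_{\subsgraph{F}})$, and proving equality at the minimizer by showing no lattice-refining exchange exists --- is sound, and your sketched case analysis (balanced circles give $\pm 1$ coefficients; only an edge on the connecting path of a handcuff produces half-integral coefficients, and there the exchange merges two unbalanced components into one, dropping $\pc+\lc$ by one) does go through. Be aware, though, that the step you defer to ``checking the possible circuit types'' is the entire content of the lemma, so as written the proposal is a plan rather than a proof; moreover, you can shortcut that case analysis entirely: if $\ba_e=\sum_{f\in F}c_f\,\ba_f$ with some $c_f\notin\ZZ$, then $c_f\neq 0$, so $F'=(F\setminus f)\cup\{e\}$ is again a basis and by Cramer's rule $\lvert c_f\rvert=\lvert\det(\bA_{\subsgraph{F'}})\rvert/\lvert\det(\bA_{\subsgraph{F}})\rvert$ is a ratio of powers of $2$ (\Cref{lem:relvolpi} again), so non-integrality forces $\lvert\det(\bA_{\subsgraph{F'}})\rvert<\lvert\det(\bA_{\subsgraph{F}})\rvert$, contradicting minimality; hence every $\ba_e$ with $e\in S\setminus F$ lies in $\zspan(\bA_{\subsgraph{F}})$. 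What your approach buys is the slightly more structural conclusion $\zspan(\bA_{\subsgraph{S}})=\zspan(\bA_{\subsgraph{F}})$ at the minimizer; what the paper's buys is brevity, since the ``gcd of powers of two equals the minimum'' observation replaces your entire exchange machinery.
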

\begin{proof}
From \Cref{remark:mofs2} we know that $\g(\bA_{\subsgraph{S}})$ is the greatest common divisor of all minors of size $n$ in $\bS$. 
Since all minors are powers of $2$ (by \Cref{lem:relvolpi}), the greatest common divisor is the lowest power of $2$ that appears.
The selection of columns in $\bS$ for which the minor attains its minimum corresponds to a forest $F\subseteq S$ of the kind that we are looking for. Then
\begin{align}
  \g(\bA_{\subsgraph{S}})=\g(\bA_{\subsgraph{F}})= 2^{\pc(F)+\lc(F)} = 2^{\mplc(S)}\, . 
%   \qedhere
\end{align}
%  \sophie{use gcd interpretation of $\g$ and powers of two result for minors:
%  the gcd of a set of numbers which are all powers of two is the smallest power of two in this set
%  }
% \vspace{1\baselineskip}
\end{proof}

\begin{proof}[Proof of \Cref{thm:signedtocyclotopeehr}]
% \sophie{will put it all together ...}

By \Cref{thm:galezonehrhart} 
\begin{equation}
 \ehr_{\tocyc(\sgraph)}(t)=\sum_{S}\frac{\g(\bA_S)}{\g(\bA_\sgraph)}\ t^{m-\lvert S\rvert}
\end{equation}
where the sum is over all spanning sets $S$ in the matroid represented by~$\bA_\sgraph$,
i.e., over all subsets $S\subseteq E$ that contain a maximal pseudo-forest of $\sgraph$.

Note that for connected signed graphs $\sgraph$ of full rank, $\zspan(\bA_\sgraph)=\ZZ^\nn$ 
(and hence $\g(\bA_\sgraph)=1$ by \Cref{remark:mofs2}) if and only if $\sgraph$ contains a halfedge by \Cref{lem:relvolpi}.
% \eleo{Shouldn't this rather be Lemma \ref{lem:integral}?}
% \sophie{No, I think the reference is correct. We already assume that our construction is such that conditions in \Cref{lem:integral} are fulfilled and we have an integral matrix.
% We want to say $\g(\bA_\sgraph)=1$, again by having one max minor equal to one. This follows from the conditions and \Cref{lem:relvolpi}. Should we write this more explicit? }
In the case of connected signed graphs without halfedges we can apply \Cref{cor:latticegaleimsmall}, and we will get a correction factor of $2$ since then  $\g(\bA_\sgraph)=2$  again by \Cref{lem:relvolpi}.
This explains the case distinction in \eqref{eq:tocyclotopeehrhart} and the difference of
a factor of 2 between the cases.

The last missing piece now is to understand $g(\bA_S)=g(\bS)$. This is given in \Cref{lem:gofSforsignedgraphs}: we need to find the minimal possible number $\mplc(S)$ of loop-tree components plus pseudo-tree components in a maximal pseudo forest in the spanning set $S$.
Then we arrive at
\[
 \ehr_{\tocyc(\sgraph)}(t)= \begin{cases}
                  \sum_{S} 2^{\mplc(S)} t^{\ned-\lvert S\rvert} & \text{if $\sgraph$ contains a halfedge,}\\
                  \sum_{S} 2^{\mplc(S)-1} t^{\ned-\lvert S\rvert} & \text{if
$\sgraph$ does not contain any halfedges.} 
                 \end{cases}
 %\qedhere
\]
\end{proof}

We conclude this section with the extension of the results to signed graphs that are not connected.

\begin{remark}\label{wlogproducts}
Let $\sgraph$ be an arbitrary signed graph with connected components $\sgraph_1, \dots, \sgraph_c$. 
Then we can order nodes and edges so that the incidence matrix $\bA_\sgraph$ has a block structure given by the connected components:
 \begin{align}
%    \vspace{-\baselineskip}
  \bA_\sgraph = \begin{bmatrix}
                 \bA_{\sgraph_1} & \bzero & \bzero & \dots &\bzero\\
                 \bzero & \bA_{\sgraph_2} &\bzero & \dots &\bzero\\
                 \vdots & \ddots & \ddots & \ddots & \vdots \\
                 \vdots &  & \ddots & \ddots & \bzero \\
                 \bzero & \dots & \dots &\bzero &\bA_{\sgraph_c}
                \end{bmatrix} .
 \end{align}
 This implies that the acyclotope of the signed graph $\sgraph$ is simply the Cartesian product of the acyclotopes of the connected components:
 \begin{align}
 \Zono(\bA_\sgraph)= \Zono(\bA_{\sgraph_1})\times \dots \times\Zono(\bA_{\sgraph_c})\,.
 \end{align} 
 Hence the Ehrhart polynomial of $\Zono(\bA_\sgraph)$ is a product of Ehrhart polynomials 
  \begin{align}
  \ehr_{\Zono(\bA_\sgraph)}= \ehr_{\Zono(\bA_{\sgraph_1})}\cdots \ehr_{\Zono(\bA_{\sgraph_c})}\,.
  \end{align}
  A similar decomposition property can be found on the level of matroids.
  Here the signed graphical matroid $\matroid(\sgraph)$ is the direct sum
%   \sophie{some matroid theory reference?}
  \begin{align}
  \matroid(\sgraph)=\matroid(\sgraph_1)\oplus\dots\oplus\matroid(\sgraph_c)\,.
  \end{align}
  This structure is preserved under taking matroid duals, hence
  \begin{align}
  \dualmatroid(\sgraph)=\dualmatroid(\sgraph_1)\oplus\dots\oplus\dualmatroid(\sgraph_c)\,.
  \end{align}
%   So the connected components in the dual matroid can also be considered 
  So we can also apply our duality construction block by block to achieve a dual representation 
 \begin{align}
  \bD_\sgraph = \begin{bmatrix}
                 \bD_{1} & \bzero & \dots &\bzero\\
                 \bzero & \ddots & \ddots & \vdots \\
                 \vdots  & \ddots & \ddots & \bzero \\
                 \bzero  & \dots &\bzero &\bD_{c}
                \end{bmatrix}\,.
 \end{align}
 Then  the signed tocyclotope is the Cartesian product
  $\Zono(\bD_\sgraph)= \Zono(\bD_{1})\times \dots \times\Zono(\bD_{c})$,
  and hence its Ehrhart polynomial is again a product of Ehrhart polynomials
  \begin{align}
   \ehr_{\Zono(\bD_\sgraph)}= \ehr_{\Zono(\bD_{1})}\cdots \ehr_{\Zono(\bD_{c})}\,.
  \end{align}

\end{remark}

% \subsection{The corresponding hyperplane arrangements}

% \section{Open Problems}
We conclude with a concrete open questions.
Recall that the lattice points in the acyclotope (for unsigned graphs) arise as indegree vectors from all orientations of the graph.
 While this correspondence is bijective for acyclic orientations, it is not for general orientations.
 For tocyclotopes we know that the vertices correspond to totally cyclic orientations.
 Is there a similar interpretation for all lattice points in the tocyclotope?
 One way to address this question might be via the algorithm in~\cite{appakotnyek}.

\bibliographystyle{amsplain}
\bibliography{bib.bib}

\setlength{\parskip}{0cm} 

\end{document}